\theoremstyle{plain}
\newtheorem{theorem}{Theorem}[section]
\newtheorem{lemma}[theorem]{Lemma}
\newtheorem{proposition}[theorem]{Proposition}
\newtheorem{corollary}[theorem]{Corollary}
\theoremstyle{definition}
\newtheorem{definition}[theorem]{Definition}
\newtheorem{example}[theorem]{Example}
\theoremstyle{remark}
\newtheorem{remark}[theorem]{Remark}
\theoremstyle{plain}
\newcommand{\Sym}{\ensuremath \mathrm{Sym}}
\newcommand{\ord}{\ensuremath \mathop \mathrm{ord} \nolimits}
\newcommand{\Obst}{\ensuremath \mathop \mathrm{Obst} \nolimits}
\author{Ekaterina Shemyakova and Franz Winkler\\\\
 Research Institute for Symbolic Computation (RISC),\\
J.Kepler University,\\
Altenbergerstr. 69, A-4040 Linz, Austria\\
 \{kath, Franz.Winkler\}@risc.uni-linz.ac.at\\
  }
\title{Obstacles to the Factorization of Linear Partial Differential Operators into
Several Factors.}
\begin{document}
\maketitle
\date
\begin{abstract}
We consider algorithms for the factorization of linear partial
differential operators. We introduce several new theoretical
notions in order to simplify such considerations.
We define an obstacle and a
ring of obstacles to factorizations.
We derive some interesting
facts about the new objects, for instance, that they are invariant under
gauge transformations.
An important theorem for the construction of factoring algorithms is proved: a
factorization is defined uniquely from a certain moment on.

For operators of orders three and two, obstacles are found
explicitly.

* This work was supported by Austrian
Science Foundation (FWF) under the project SFB F013/F1304.
\end{abstract}
\section{Introduction}

We investigate the problem of the factorization of a linear partial
differential operator over some field. The starting point is the
algorithm of Grigoriev--Schwarz \cite{GS}, which extends a
factorization (into coprime factors) of the symbol of an operator to
a factorization of this operator. At the first step of the algorithm,
only the highest terms of the factors of a factorization are known.
At every succeeding step, either we determine the next component in each
factor, or we conclude that there is no such factorization. In the
latter case we lose all of the information about the operator that we
obtained implicitly during the execution of the algorithm. Here, we
suggest that the information can be used, and introduce the notions of partial
factorizations and common obstacles. The latter are exactly the
invariants of Laplace for a second-order strictly hyperbolic
operator \cite{tsarev05}.

We use partial factorizations to prove Theorem \ref{GS_non_coprime} below,
which states that a factorization is uniquely defined from
a certain moment on. The theorem of Grigoriev-Schwarz \cite{GS} is a
particular case of this theorem.

Neither a common obstacle, nor its symbol is invariant or unique in
general; some examples can be seen in \cite{kart}.
However, we
consider the factor ring of the ring of polynomials (corresponding
to the ring of linear differential operators) modulo certain
homogeneous ideal. We call it the ring of obstacles. The symbols of
all common obstacles belong to the same class of this factor ring!
We call the class the obstacle to factorizations. It is significant
that the obstacle is invariant under gauge transformations.
Others interesting properties are also derived.

We compute explicit formulas for common obstacles for bivariate
second- and third-order operators.

It is important to mention that the theory has an application: it
was used to obtain a full system of invariants for bivariate
hyperbolic third-order operator \cite{invariants}.

  The present work is an (independent) continuation of the work \cite{spain},
where we
consider factorizations into two factors.
\section{Preliminaries}

 Let $K$ be some field.
 Let $\Delta=\{ \partial_1, \dots, \partial_n\}$
be commuting derivations acting on $K$.
 Consider the ring of linear differential operators
\[
K[D]=K[D_1, \dots, D_n]\ ,
\]
where $D_1, \dots, D_n$ correspond to the derivations $\partial_1,
\dots, \partial_n$ respectively.

The totality of all linear differential operators of orders
$\leqslant i$ with defined left and right multiplication is a
$K$-bimodule, which we denote by $K_{ \leqslant i}$. Thus we have the
filtration $ \dots \supset K_{ \leqslant i} \supset K_{ \leqslant i-1} \supset
\dots \supset K_{ \leqslant 0}.$ Consider the associate algebra
\[
Smbl_*=\sum_{i \geq 0}Smbl_i, \quad Smbl_i=K_{ \leqslant i} \diagup K_{
\leqslant i-1}.
\]
$K$-module $Smbl_*$ is a commutative $K$-algebra, which is
isomorphic to the ring of polynomials $K[X]=K[X_1, \dots, X_n]$ in
$n$ variables. The image of the operator $L \in K[D]$ by the natural
projection is some element $\Sym_L$ of $K[X]$. Actually, the symbol
of an operator is a homogeneous polynomial corresponding to the sum
of the highest terms.

 We use the notation
\[
D^{(i_1, \dots, i_n)}:=D_1^{i_1} \dots D_n^{i_n}\ ,
\]
and define the order as follows:
\[
|D^{(i_1, \dots, i_n)}|=\ord(D^{(i_1, \dots, i_n)}):= i_1 + \dots +
i_n\ ,
\]
and in addition the order of the zero operator is $-\infty$.

 For a homogeneous polynomial $S \in K[X]$ we define the operator
 $\widehat{S }\in K[D]$, which is the result of the substitution of
 the operator $D_i$ for
 each variable $X_i$. If there is no danger of misunderstanding we
 use just $S$ to denote the operator $\widehat{S}$.  By $K_i[D]$ we denote the set of
 all operators in $K[D]$ of order $i$.

 Now every operator $L \in K[D]$ can be written as
\begin{equation}
 L = \sum_{|J| \leq d} a_J D^J\ =\sum_{i=0}^{d} L_{i} \ ,
\end{equation}
where $a_J \in K, \ J \in \bf{N}^n$, and $L_i$ is the $i$th
component of $L$.
\section{Partial Factorizations}

We start by generalizing several notions that were
introduced for the case of factorizations into two
factors in \cite{spain}.
\begin{definition}
Let $L \in K[D]$ and suppose that its symbol has a decomposition
$\Sym_L=S_1 \dots S_k$. Then we say that the factorization
\[
L=F_1 \circ \dots  \circ F_k , \quad \mbox{where} \quad
\Sym_{F_i}=S_i\ , \ \forall i \in \{1, \dots, k\},
\]
is \emph{of the factorization type} $(S_1)(S_2)\dots(S_k)$.
\end{definition}
\begin{definition} \label{partial_fact_k_factors}
Let for some operators $L$, for $F_i \in K[D], \ i=1, \dots, k$ and for some
$t \in \{0, \dots,\ord(L) \}$
\begin{equation} \label{partial_fac_arb}
\ord(L-F_1 \circ \dots \circ F_k) < t
\end{equation}
holds. Then we say that $F_1 \circ \dots \circ F_k $ is \emph{a
partial factorization} of order $t$ of the operator $L$. If in
addition $S_i=\Sym_{F_i}, \ i=1, \dots, k$ (so $\Sym_L=S_1 \dots
S_k$), then this partial factorization is of the factorization type
$(S_1) \dots (S_k)$.
\end{definition}
\begin{remark} Every usual factorization of $L \in K[D]$ is a partial factorization
of order $0$.
\end{remark}
\begin{remark} Let $L \in K[D]$, $\ord(L)=d$. Then for every
 factorization of the symbol $\Sym_L=S_1 \dots S_k$
 the corresponding composition of operators $\widehat{S}_1 \circ \dots  \circ
 \widehat{S}_k$ is a partial factorization of order $d$.
\end{remark}

 Let $L \in K[D]$ and $F_1 \circ \dots \circ F_k$ be a partial
 factorization of order $t$. Note that the condition (\ref{partial_fac_arb})
 still holds if we change any term whose order is less than or
 equal to $t-(d-d_j)$ in any factor $F_j, \ j \in \{1, \dots , k \}$.
 Thus we obtain new partial factorizations of order less than or
 equal $t$. Thus we introduce the following definition.
\begin{definition} \label{part_fact_def_k_factors}
Let $L \in K[D]$, $\Sym_L=S_1 \dots S_k$, $\ord(S_i)=d_i,\ i=1, \dots, k$ and
\[
F_1 \circ \dots \circ F_k, \quad F'_1 \circ \dots \circ F'_k
\]
be partial factorizations of orders $t$ and $t'$ respectively. Let
$t'<t$, then $F'_1 \circ \dots \circ F'_k$ is an extension of $F_1
\circ \dots \circ F_k$ if
\[
\ord(F_i-F'_i)<t-(d-d_i), \ \forall i \in \{1, \dots, k \}\ .
\]
\end{definition}
\begin{example} Consider the fifth-order operator
\[
L= (D_1^2+D_2+1) \circ (D_1^2D_2 + D_1 D_2 + D_1 +1)\ .
\]
Compositions of the type
\[
(D_1^2+ \dots ) \circ (D_1^2D_2 + \dots) \ ,
\]
where ellipses mean arbitrarily chosen terms of lower orders, are partial
factorizations of order $5$. Their extensions are the following
fourth-order partial factorizations of the type
\[
(D_1^2+D_2 + \dots) \circ (D_1^2D_2 + D_1 D_2 + \dots).
\]
\end{example}
\begin{remark} Let $L \in K[D]$. Then $F_1 \circ \dots \circ F_k$ is
 a partial factorization of $L$ of the type $(S_1) \dots (S_k)$ if
 and only if $F_1 \circ \dots \circ F_k$
 is an extension of a partial factorization $S_1 \circ  \dots  \circ  S_2$.
\end{remark}
 Now we formulate two easy to prove facts, which will be useful for
 the proof of a theorem below.
\begin{proposition} \label{S1S2_coprime_unique}  Let $S_1$, $S_2$, $p $
 be homogeneous polynomials, in an arbitrary number of variables, of
 orders $d_1$, $d_2$, $s \ (0<s<d_1+d_2)$  respectively. Let, in
 addition, $S_1$ and $S_2$ be coprime.
 Then there exists at most one pair $(u,v)$ of homogeneous polynomials
 $u$ and $v$ of orders $s-d_1$ and $s-d_2$ respectively, such that
\[
  S_1 \cdot u + S_2  \cdot v = p.
\]
\end{proposition}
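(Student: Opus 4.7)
The plan is to argue by contradiction / subtraction in the standard way for Bézout-type uniqueness statements over a UFD. Suppose two such pairs $(u_1,v_1)$ and $(u_2,v_2)$ exist. Subtracting the two relations $S_1 u_i + S_2 v_i = p$ yields
\[
S_1 \cdot (u_1-u_2) \;=\; S_2 \cdot (v_2-v_1),
\]
so I have to show that this forces $u_1 = u_2$ and $v_1 = v_2$.

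First I would dispose of the degenerate case in which one of $s-d_1$, $s-d_2$ is negative: then the only homogeneous polynomial of that order is $0$, so the corresponding component of the pair is forced, and uniqueness is immediate. So assume $s-d_1 \geq 0$ and $s-d_2 \geq 0$.

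Next, working in the polynomial ring $K[X_1,\dots,X_n]$, which is a UFD, I use the hypothesis that $S_1$ and $S_2$ are coprime: the displayed equation says that $S_2$ divides the product $S_1(u_1-u_2)$, hence $S_2$ divides $u_1-u_2$. The key step is then a degree comparison: $u_1-u_2$ is a homogeneous polynomial of order $s-d_1$, and by assumption $s-d_1 < d_2 = \ord(S_2)$. So the only multiple of $S_2$ of that order is the zero polynomial, and therefore $u_1 = u_2$. Substituting back (and using that $S_2 \neq 0$, which follows from $d_2 \geq 0$ and $S_2$ being one of a coprime pair, so in particular nonzero) gives $v_1 = v_2$.

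The argument is entirely routine; the only thing to watch is the strict inequality $s < d_1+d_2$, which is exactly what makes the degree of $u_1-u_2$ too small to absorb a factor of $S_2$. No step looks like a real obstacle, but if I wanted to be careful I would record at the outset that the homogeneity hypothesis keeps the quotient $(u_1-u_2)/S_2$ homogeneous and of the expected order, so that the degree bookkeeping above is valid.
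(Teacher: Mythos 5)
Your proof is correct, and it is essentially the argument the paper has in mind: the paper states this proposition without proof, calling it one of "two easy to prove facts," but it does prove the $k$-factor generalization, Lemma~\ref{main}, by exactly the same route — subtract two purported solutions, invoke coprimality in the UFD $K[X_1,\dots,X_n]$ to transfer divisibility, and close with the degree comparison forced by $t<\ord(S)$ (your $s<d_1+d_2$). Your handling of the degenerate case $s-d_i<0$ and the explicit appeal to homogeneity are sensible bookkeeping that the paper leaves implicit.
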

The second fact is the generalization of Proposition
\ref{S1S2_coprime_unique} to the case of non-coprime polynomials.
\begin{proposition} \label{S1S2_coprime_unique_gen}
Let $S_1$, $S_2$, $p $
 be homogeneous polynomials of orders $d_1$, $d_2$, $s$ respectively and
 in an arbitrary number of variables. Consider the polynomial $S_0$ of order
 $d_0$,
 which is the greatest common divisor of $S_1$ and $S_2$, and $ \ 0<s<d_1+d_2-d_0$.
 Then there exist at most one pair $(u,v)$ of homogeneous polynomials
 $u$ and $v$ of orders  $s-d_1$ and $s-d_2$ respectively, such that
\begin{equation} \label{gen_franz's_lemma}
  S_1 \cdot u + S_2  \cdot v = p.
\end{equation}
\end{proposition}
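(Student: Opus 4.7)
The plan is to prove uniqueness by subtracting two alleged representations and then exploiting unique factorization in the polynomial ring $K[X]$. Suppose $(u,v)$ and $(u',v')$ are two pairs of homogeneous polynomials of orders $s-d_1$ and $s-d_2$ respectively, both satisfying (\ref{gen_franz's_lemma}). Subtracting yields the homogeneous identity
\[
S_1 (u-u') + S_2 (v-v') = 0,\qquad\text{i.e.}\qquad S_1(u-u') = -S_2(v-v').
\]

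Next I would pull out the common factor. Write $S_1 = S_0 T_1$ and $S_2 = S_0 T_2$ with $\gcd(T_1,T_2)=1$, so $\ord(T_1)=d_1-d_0$ and $\ord(T_2)=d_2-d_0$. Cancelling $S_0$ in the integral domain $K[X]$ produces
\[
T_1 (u-u') = -T_2 (v-v').
\]
Coprimality of $T_1$ and $T_2$, combined with unique factorization in $K[X]$, then forces $T_2 \mid (u-u')$ and, symmetrically, $T_1 \mid (v-v')$. So there exists a homogeneous $w \in K[X]$ with $u-u' = T_2 \cdot w$.

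The final step is a degree comparison. The polynomial $u-u'$ is homogeneous of order $s-d_1$ (interpreted as zero when $s<d_1$), while $T_2$ has order $d_2-d_0$. Hence if $w \neq 0$ we must have $s-d_1 \geq d_2-d_0$, equivalently $s \geq d_1+d_2-d_0$, contradicting the standing hypothesis $s < d_1+d_2-d_0$. Therefore $u-u'=0$, and feeding this back into the relation (or repeating the symmetric argument) gives $v-v'=0$. Uniqueness follows.

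The only real subtlety, and hence the main obstacle to watch for, is the boundary bookkeeping: confirming that when $s-d_1$ or $s-d_2$ is negative the convention that the corresponding polynomial is zero does not break the argument, and that the cancellation of $S_0$ remains legitimate in the degenerate case $d_0=0$ (where $T_i = S_i$ and the conclusion reduces to Proposition \ref{S1S2_coprime_unique}). Neither point is genuinely difficult, so the proof should go through in a few lines once they are dispatched.
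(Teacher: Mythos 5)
Your proof is correct. Note that the paper does not itself prove Proposition~\ref{S1S2_coprime_unique_gen}: it is stated as one of ``two easy to prove facts'' and left to the reader, so there is no proof in the paper to compare against. The route you take is the natural one and almost certainly what the authors had in mind---subtract the two representations to get $S_1(u-u')=-S_2(v-v')$, cancel the gcd $S_0$ in the integral domain $K[X]$ to obtain $T_1(u-u')=-T_2(v-v')$ with $T_i=S_i/S_0$ coprime, use unique factorization in $K[X]$ to conclude $T_2 \mid (u-u')$, and then note that a nonzero multiple of $T_2$ would have order at least $d_2-d_0$, which combined with $\ord(u-u')=s-d_1$ contradicts the standing hypothesis $s<d_1+d_2-d_0$; hence $u=u'$, and then $S_2(v-v')=0$ forces $v=v'$. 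Your side remarks on the boundary bookkeeping are correct: when $s-d_1<0$ or $s-d_2<0$ the convention that the corresponding polynomial is identically zero only makes the conclusion easier, and when $d_0=0$ the argument reduces cleanly to Proposition~\ref{S1S2_coprime_unique}.
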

For every factorization $S_1 \cdot S_2$ of the symbol, the
corresponding composition of operators $\widehat{S}_1 \circ
\widehat{S}_2$ is a partial factorization  of the operator $L$. In
the case of coprime $S_1$ and $S_2$ there exists at most one
extension of this partial factorization to a factorization of $L$
\cite{GS}.
However, if there exists a nontrivial common divisor of $S_1$
and $S_2$, then this is not necessarily the case. Consider, for
example, the operator of Blumberg-Landau \cite{blumberg}:
\[
L=D_x^3+x D_x^2D_y+2 D_x^2+(2x+2)D_xD_y+D_x+(2+x)D_y,
\]
which is a frequently cited instance of an operator that has two
factorizations into different numbers of irreducible factors (that is,
factors that cannot be factored into factors of smaller orders):
\[
L=(D_x+1) \circ (D_x+1) \circ (D_x+xD_y) \ = \ (D_x^2+xD_xD_y + D_x+
(2+x) D_y) \circ (D_x+1).
\]
The same operator $L$ (the symbol of $L$ is $X^3+ x X^2Y$) has a
whole family of factorizations into two factors with the symbols
$S_1=X$ and $S_2=X(X+XY)$ respectively:
\[
L=\Big(D_x + 1 + \frac{1}{x+f_1(y)} \Big) \circ \Big(D^2_x + x D_x
D_y + (1- \frac{1}{x+f_1(y)})D_x + (x+1- \frac{x}{x+f_1(y)})D_y
\Big),
\]
where $f_1(y) \in K$ is a functional parameter.
 Thus, there is no uniqueness of factorization in this case. Nevertheless,
 even in the case of non-coprime symbols of factors we may formulate
 the following:
 \begin{theorem} \label{GS_non_coprime}
 Let $L \in K[D]$, $\Sym_L=S_1 \cdot S_2$, $\ord(L)=d$,
 and let the greatest common divisor of $S_1$ and $S_2$ be a
homogeneous polynomial $S_0$ of order $s$. Then for every
$(d-d_0)$th order partial factorization of the type $(S_1)(S_2)$,
there is at most one extension to a complete factorization of $L$ of
the same type.
\end{theorem}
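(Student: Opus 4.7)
The plan is to argue by contradiction. Suppose $F_1 \circ F_2$ and $F_1' \circ F_2'$ are two complete factorizations of $L$ of type $(S_1)(S_2)$, both extending a common $(d-d_0)$th-order partial factorization. Set $U = F_1 - F_1'$ and $V = F_2 - F_2'$. Applying Definition \ref{part_fact_def_k_factors} to each of the two extensions (and using the triangle-type bound on the order of a difference) gives $\ord(U) < d_1 - d_0$ and $\ord(V) < d_2 - d_0$. Subtracting $F_1' \circ F_2' = L$ from $F_1 \circ F_2 = L$ and expanding produces the identity
\[
F_1' \circ V + U \circ F_2' + U \circ V = 0,
\]
and the task becomes to force $U = V = 0$.

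First I would dispose of the degenerate cases. If $V = 0$ but $U \neq 0$, the identity reduces to $U \circ F_2' = 0$, whose symbol $\Sym_U \cdot S_2$ lives in the domain $K[X]$; since $S_2 \neq 0$, this forces $\Sym_U = 0$, contradicting $U \neq 0$. The case $U = 0$, $V \neq 0$ is symmetric, so both may be assumed nonzero. Put $t = \max(d_1 + \ord(V),\ d_2 + \ord(U))$. The bounds on $\ord(U), \ord(V)$ immediately yield $t < d_1 + d_2 - d_0$ and also $\ord(U \circ V) = \ord(U) + \ord(V) < t$, so $U \circ V$ drops out of the order-$t$ balance. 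If only one of $d_1 + \ord(V)$, $d_2 + \ord(U)$ realizes $t$, the order-$t$ symbol equation is either $S_1 \cdot \Sym_V = 0$ or $\Sym_U \cdot S_2 = 0$, again contradicting the relevant nonvanishing via the domain property. If both realize $t$, extracting the leading symbols gives the homogeneous relation
\[
S_1 \cdot \Sym_V + S_2 \cdot \Sym_U = 0,
\]
and Proposition \ref{S1S2_coprime_unique_gen}, applied with right-hand side $p = 0$, forces $\Sym_U = \Sym_V = 0$, the required contradiction.

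The main obstacle to anticipate is the bookkeeping needed to verify the hypotheses of Proposition \ref{S1S2_coprime_unique_gen} in the tied case: one must check that the strict inequality $t < d_1 + d_2 - d_0$ really does follow from the extension bounds, and that $\Sym_U$ and $\Sym_V$ have exactly the homogeneous degrees $t - d_2$ and $t - d_1$ demanded by the proposition. Once that alignment is in place the non-coprime uniqueness statement is the decisive ingredient, and every remaining case is handled by the observation that $K[X]$ is an integral domain.
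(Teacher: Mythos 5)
Your proof is correct, and it takes a genuinely different route from the paper's. The paper reduces the theorem to a step-by-step lemma: for each order $t \le d-d_0$, a $t$-th order partial factorization admits at most one extension to order $t-1$; this is proved by comparing components of order $t-1$ in the full product expansion and invoking Proposition~\ref{S1S2_coprime_unique_gen} at each level, then composing by descending induction. You instead argue globally: take two complete factorizations extending the same $(d-d_0)$th-order partial factorization, form $U=F_1-F_1'$, $V=F_2-F_2'$, obtain $F_1'\circ V + U\circ F_2' + U\circ V = 0$, bound $\ord U < d_1-d_0$ and $\ord V < d_2-d_0$ from Definition~\ref{part_fact_def_k_factors}, and inspect the leading homogeneous component at $t=\max(d_1+\ord V,\ d_2+\ord U)$, invoking Proposition~\ref{S1S2_coprime_unique_gen} (with $p=0$) only once in the tied case. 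Both proofs hinge on the same proposition, but the paper's descent is algorithmic in flavor (it tells you how to compute the next component), while yours is a cleaner pure-uniqueness argument that avoids the induction and the bookkeeping of the auxiliary polynomial $P_{t-1}$. One small point worth making explicit if you write it up: the ``both nonzero'' case implicitly forces $d_0<d_1$ and $d_0<d_2$ (otherwise the order bounds already give $U=0$ or $V=0$), which is exactly what guarantees $\ord(U\circ V)<t$ and $0<t<d_1+d_2-d_0$, the hypotheses you need for Proposition~\ref{S1S2_coprime_unique_gen}.
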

 To prove the theorem, it is enough to prove the following lemma.
\begin{lemma}
 Let $L \in K[D]$, $\Sym_L=S_1 \cdot S_2$, $\ord(L)=d$,
 and let the greatest common divisor of $S_1$ and $S_2$
 be a homogeneous polynomial $S_0$ of order $s$.
 Then for every $t$-th ($t\leq  (d-d_0)$) order partial factorization of the
 type $(S_1)(S_2)$, there is at most one (up to lower order
 terms) extension to the partial factorization of order $t-1$ of the same type.
\end{lemma}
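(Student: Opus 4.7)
The plan is to reduce the uniqueness question to Proposition~\ref{S1S2_coprime_unique_gen}. Suppose $F'_1 \circ F'_2$ and $F''_1 \circ F''_2$ are two extensions to order $t-1$ of the given partial factorization $F_1 \circ F_2$. By Definition~\ref{part_fact_def_k_factors}, the differences $u'_i := F'_i - F_i$ and $u''_i := F''_i - F_i$ satisfy $\ord(u'_i), \ord(u''_i) \leq t - d + d_i - 1$, so in particular $\ord(u'_1), \ord(u''_1) \leq t - d_2 - 1$ and $\ord(u'_2), \ord(u''_2) \leq t - d_1 - 1$. It suffices to show that the homogeneous leading components of $u'_i$ and $u''_i$ coincide, since then $F'_i - F''_i$ has order strictly below $t - d + d_i - 1$, which is exactly the meaning of ``unique up to lower order terms''.

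Set $R := L - F_1 \circ F_2$, so $\ord(R) \leq t-1$, and let $p$ denote the homogeneous component of $R$ of order $t-1$. Expanding gives
\[
L - F'_1 \circ F'_2 \;=\; R \;-\; u'_1 \circ F_2 \;-\; F_1 \circ u'_2 \;-\; u'_1 \circ u'_2 .
\]
Since $\ord(u'_1 \circ u'_2) \leq 2t - d - 2 \leq t - 2$ (using $t \leq d - d_0 \leq d$), the cross-term contributes nothing at order $t-1$. The leading symbol of $u'_1 \circ F_2$ is $\Sym_{u'_1}\cdot S_2$ and that of $F_1 \circ u'_2$ is $S_1 \cdot \Sym_{u'_2}$. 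Because $F'_1 \circ F'_2$ has order $t-1$, the order-$(t-1)$ part of the right-hand side must vanish, so
\[
S_1 \cdot \Sym_{u'_2} \;+\; S_2 \cdot \Sym_{u'_1} \;=\; p,
\]
and the same equation holds with $u''_i$ in place of $u'_i$.

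Now Proposition~\ref{S1S2_coprime_unique_gen} applies with $s = t - 1$, provided $0 < t-1 < d_1 + d_2 - d_0 = d - d_0$. The hypothesis $t \leq d - d_0$ gives $t - 1 < d - d_0$, while the boundary case $t \leq 1$ is vacuous as the relevant symbols would have nonpositive degree. Hence the pair $(\Sym_{u'_1}, \Sym_{u'_2})$ is uniquely determined by $p$, so it agrees with $(\Sym_{u''_1}, \Sym_{u''_2})$, which completes the argument.

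The main technical subtlety is the bookkeeping of orders: verifying that the term $u'_1 \circ u'_2$ really is negligible at order $t-1$, and that the resulting leading-symbol identity lands exactly in the regime where the non-coprime proposition applies. It is precisely the bound $t \leq d - d_0$ (rather than the weaker $t \leq d$ that would suffice in the coprime case of Grigoriev--Schwarz) that translates into the inequality $s < d_1 + d_2 - d_0$ needed to invoke Proposition~\ref{S1S2_coprime_unique_gen}.
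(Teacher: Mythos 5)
Your proof is correct and follows essentially the same route as the paper: both compare the degree-$(t-1)$ components of $L$ and the composition, isolate the linear contribution $S_1 \cdot(\cdot) + S_2 \cdot(\cdot)$ of the top unknown components of the factors, and invoke Proposition~\ref{S1S2_coprime_unique_gen} with $s = t-1$, using $t \leq d - d_0$ to guarantee $s < d_1 + d_2 - d_0$. The only cosmetic difference is that the paper writes the full ansatz $L = (\widehat{S}_1 + \sum G_j)\circ(\widehat{S}_2 + \sum H_j)$ and reads off the relevant equation (plus a separate remark for $d_0 = 0$), whereas you factor out the known part $R = L - F_1\circ F_2$ and bound the cross-term $u'_1\circ u'_2$ explicitly; also, strictly speaking you should replace ``$\Sym_{u'_i}$'' by ``the degree-$(t-d+d_i-1)$ component of $u'_i$'' (which may be zero), since $u'_i$ need not attain its maximal allowed order, but this does not affect the argument.
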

\begin{proof} If $d_0=0$, then the statement of the lemma is implies
 from  \cite{GS}.
 If $d_0>0$, consider the general form of a complete factorization of $L$,
 that extends the given $t$-th order partial factorization:
\begin{equation}\label{L=L1L2}
L= \left(\widehat{S}_1+ \sum_{j=0}^{k_1-1} G_j \right) \circ
\left(\widehat{S}_2 + \sum_{j=0}^{k_2-1} H_j \right)\ ,
\end{equation}
 where $k_1=\ord(S_1)$, $k_2=\ord(S_2)$ and $ G_j \in K_j[D], \quad H_i
 \in K_i[D], \quad j=0, \dots, (k_1-1), \ i=0, \dots, (k_2-1)$.
 By comparing components of order $t-1$ on the both sides of the equality
 (\ref{L=L1L2}), we get
\begin{equation}
\label{L_k+l-i}
    L_{t-1}= H_{t-k_1-1} \cdot S_1  +  G_{t-k_2-1} \cdot S_2 + P_{t-1},
\end{equation}
 where $P_{t-1}$ is a homogeneous polynomial of order $t$,
 which is determined uniquely
 by the polynomials
 $G_i$, $H_j$, $ i>t-k_1-1$, $ j>t-k_2-1$,
 which are components of order $t$ of the given partial
 factorization. We assume that polynomials $G_i$, $H_i$ are $0$ for $i<0$.

 Now, since $t-1 < d-d_0$, we may apply proposition
 \ref{S1S2_coprime_unique_gen}: there is at most one solution
 of equation (\ref{L_k+l-i}). Thus there exists at most one
 extension to a partial factorization of order $t-1$.
\end{proof}
\begin{corollary} Let $L \in K[X], \ \Sym_L=S_1 \cdot S_2,$ and $S_1$ and
$S_2$ be coprime. Then there is at most one common factorization of
$L$ of the type $(S_1)(S_2)$. Thus the theorem is a generalization
of the Grigoriev-Schwarz result \cite{GS}.
\end{corollary}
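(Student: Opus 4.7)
The plan is to derive the corollary as an immediate specialization of Theorem~\ref{GS_non_coprime}. Since $S_1$ and $S_2$ are coprime, their greatest common divisor $S_0$ is a constant and hence has order $d_0 = 0$. Substituting this into the theorem yields the statement that every $d$th order partial factorization of type $(S_1)(S_2)$ admits at most one extension to a complete factorization of $L$ of the same type.

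To extract uniqueness of a full factorization, I would single out the canonical $d$th order partial factorization $\widehat{S}_1 \circ \widehat{S}_2$, which is of type $(S_1)(S_2)$ by the second remark following Definition~\ref{partial_fact_k_factors}. Any complete factorization $L = F_1 \circ F_2$ with $\Sym_{F_i} = S_i$ is then an extension of $\widehat{S}_1 \circ \widehat{S}_2$ in the sense of Definition~\ref{part_fact_def_k_factors}: taking $t = d$ and $t' = 0$, the condition $\ord(F_i - \widehat{S}_i) < d - (d - d_i) = d_i$ holds automatically, because $\Sym_{F_i} = S_i$ forces the difference $F_i - \widehat{S}_i$ to consist only of terms of order strictly less than $d_i$. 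Consequently, every complete factorization of $L$ of type $(S_1)(S_2)$ is an extension of the single partial factorization $\widehat{S}_1 \circ \widehat{S}_2$, and Theorem~\ref{GS_non_coprime} immediately delivers uniqueness.

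The concluding clause, that this recovers the Grigoriev--Schwarz result, is a matter of observation: \cite{GS} treats precisely the coprime case, which is what the corollary specializes to. I do not expect any substantive obstacle in this argument; it is essentially a bookkeeping step. The only spot where a reader might slip is in correctly matching the parameters $t$, $t'$, $d$, $d_i$ in the definition of \emph{extension} to the concrete situation of a full factorization viewed as a partial factorization of order $0$, but once this identification is made the corollary follows from the theorem with no further work.
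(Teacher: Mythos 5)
Your proposal is correct and matches the route the paper intends (the corollary is stated without proof as an immediate specialization of Theorem~\ref{GS_non_coprime} to $d_0=0$). The one place where care is needed is exactly the one you flag: the theorem gives uniqueness of extensions of a \emph{given} $d$th-order partial factorization, so one must fix a single such partial factorization and verify that every complete factorization of type $(S_1)(S_2)$ is an extension of it — and you do this correctly by taking $\widehat{S}_1\circ\widehat{S}_2$ and checking $\ord(F_i-\widehat{S}_i)<d_i$ follows from $\Sym_{F_i}=S_i$.
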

\begin{corollary} In the case of ordinary differential operators,
 the greatest common divisor of $S_1$ and $S_2$ is
\[
\gcd (S_1,S_2)=X^{d_0}, \quad \text{where} \quad
d_0=min(\ord(S_1),\ord(S_2)).
\]
 Then for every partial factorization of order
\[
max \big( \ord(S_1),\ord(S_2) \big)-1
\]
 there is at most one extension to a complete factorization.
\end{corollary}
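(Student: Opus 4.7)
The plan is to derive the corollary as an immediate specialization of Theorem~\ref{GS_non_coprime} to the single-variable polynomial ring $K[X]$. The first step is to verify the stated formula for the greatest common divisor: since $n=1$, every nonzero homogeneous polynomial of degree $k$ in $K[X]$ is of the form $c\,X^k$ with $c\in K$. Thus $S_1=c_1 X^{\ord(S_1)}$ and $S_2=c_2 X^{\ord(S_2)}$ are both pure powers of $X$, and their (normalized) greatest common divisor is $X^{d_0}$ with $d_0=\min(\ord(S_1),\ord(S_2))$, as claimed.

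The second step is to translate the bound $d-d_0$ produced by the theorem into the form stated in the corollary. Using $d=\ord(L)=\ord(S_1)+\ord(S_2)$ together with the elementary identity $a+b-\min(a,b)=\max(a,b)$, I get $d-d_0=\max(\ord(S_1),\ord(S_2))$. A partial factorization of order $\max(\ord(S_1),\ord(S_2))-1$ is \emph{a fortiori} a partial factorization of order $\max(\ord(S_1),\ord(S_2))=d-d_0$, since its discrepancy with $L$ has even smaller order. Consequently Theorem~\ref{GS_non_coprime} applies and gives at most one extension to a complete factorization.

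There is no real obstacle in this argument: all of the substantive content — namely the use of Proposition~\ref{S1S2_coprime_unique_gen} and the descent on $t$ performed inside the proof of the theorem — is already packaged in Theorem~\ref{GS_non_coprime}. What the corollary contributes is just the one-variable observation that the only homogeneous polynomials in $K[X]$ are scalar multiples of $X^k$, so that the $\gcd$ is again a pure power of $X$ of the smaller exponent, combined with the arithmetic identity relating sum, minimum, and maximum of two nonnegative integers.
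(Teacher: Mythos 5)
Your proof is correct and follows the intended route: the paper leaves this corollary without explicit proof as an immediate specialization of Theorem~\ref{GS_non_coprime}, and you supply exactly the needed details — that in $K[X]$ every homogeneous polynomial is a scalar times a pure power so $\gcd(S_1,S_2)=X^{d_0}$ with $d_0=\min(\ord S_1,\ord S_2)$, and that $d-d_0=\ord S_1+\ord S_2-\min(\ord S_1,\ord S_2)=\max(\ord S_1,\ord S_2)$. Your observation that a partial factorization of order $\max(\ord S_1,\ord S_2)-1$ is a fortiori one of order $d-d_0$ (because $\ord(L-F_1\circ F_2)<t'$ implies $\ord(L-F_1\circ F_2)<t$ for $t'<t$) correctly closes the small gap between the corollary's stated order and the bound $d-d_0$ appearing in the theorem.
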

\begin{corollary} \label{unique_partial_coprime}
 Let $L \in K[D]$, $\Sym_L=S_1 \cdot S_2$,
 and let $S_1$ be coprime with $S_2$. Then for every $t$, $\ t<\ord(L)$
 there is at most one (up to lower order terms)
 partial factorization of order $t$.
\end{corollary}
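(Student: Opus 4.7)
The plan is to iterate the lemma just proved. Since $S_1$ and $S_2$ are coprime, the greatest common divisor $S_0$ has order $d_0 = 0$, so the hypothesis $t \leq d - d_0$ of the lemma is satisfied for every $t \in \{1, \dots, d\}$. Thus at each order the passage to the next lower order is uniquely determined up to lower order terms, and the corollary should follow by accumulating these unique passages from $s = d$ down to $s = t$.

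Concretely, I would prove the statement by downward induction on $s$, starting at $s = d$ and descending to $s = t$, with induction hypothesis
\[
P(s): \quad \ord(F_i - F'_i) < s - (d - d_i), \qquad i = 1, 2,
\]
for any two partial factorizations $F_1 \circ F_2$ and $F'_1 \circ F'_2$ of $L$ of order $t$ and type $(S_1)(S_2)$. The base case $P(d)$ is immediate because $F_i$ and $F'_i$ share the leading symbol $S_i$ of degree $d_i$, so $\ord(F_i - F'_i) < d_i$.

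For the inductive step, note that both $F_1 \circ F_2$ and $F'_1 \circ F'_2$ remain partial factorizations of every order between $t$ and $d$, since $\ord(L - F_1 \circ F_2) < t \leq s - 1$. The bounds given by $P(s)$ are precisely the extension inequalities of Definition \ref{part_fact_def_k_factors}, so $F_1 \circ F_2$ regarded at order $s - 1$ is an extension of $F'_1 \circ F'_2$ regarded at order $s$; trivially, $F'_1 \circ F'_2$ is also such an extension of itself. Applying the lemma to the order-$s$ partial factorization $F'_1 \circ F'_2$ forces the ``next components'' --- of degrees $s - 1 - d_2$ in the first factor and $s - 1 - d_1$ in the second --- to coincide for the two extensions, giving $\ord(F_i - F'_i) < (s - 1) - (d - d_i)$, i.e.\ $P(s - 1)$. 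Iterating down to $s = t$ yields the claim. I expect the only delicate point to be verifying that two \emph{a priori} distinct partial factorizations of order $t$ may simultaneously be regarded as extensions of a single order-$s$ one; but this is exactly the content of the running hypothesis $P(s)$, so the bookkeeping is automatic and all the work is done by the lemma.
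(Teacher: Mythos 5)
Your proposal is correct and is the natural argument the paper intends: since $S_1,S_2$ coprime gives $d_0=0$, the hypothesis $s\leq d-d_0$ of the Lemma holds at every level, and iterating the uniqueness-of-extension statement downward from the trivial order-$d$ partial factorization $\widehat{S}_1\circ\widehat{S}_2$ to order $t$ is exactly how the Corollary follows. The paper leaves this corollary without a written proof, and your downward induction (with the running hypothesis $P(s)$ guaranteeing that both candidates qualify as extensions of the same order-$s$ partial factorization, so the Lemma can be applied) is the standard bookkeeping that fills in that gap.
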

%
\section{Ring of Obstacles, Obstacles}
By induction on the number of factors and with the theorem of
uniqueness, if a factorization has the base \cite{GS}, one may prove
the following theorem:
\begin{theorem} \label{GS_unique_k_factors}
Let
 $L \in K[D]$, $\Sym_L=S_1 \cdot S_2 \dots S_k$ and  let $S_1, \dots, S_k$ be coprime.
 Then there exists at most one factorization of the type $(S_1)(S_2)\dots (S_k)$.
\end{theorem}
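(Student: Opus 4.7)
The plan is to induct on the number of factors $k$, using Theorem~\ref{GS_non_coprime} (or rather its coprime corollary) as the base and as the engine of the induction step.

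For the base case $k=2$, the statement is exactly the first Corollary to Theorem~\ref{GS_non_coprime}: when $S_1$ and $S_2$ are coprime the greatest common divisor $S_0$ has order $d_0=0$, so the hypothesis of Theorem~\ref{GS_non_coprime} applies with the trivial $d$th order partial factorization $\widehat{S}_1\circ\widehat{S}_2$, giving at most one complete factorization of type $(S_1)(S_2)$.

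For the inductive step, assume the result holds for $k-1$ coprime factors, and suppose
\[
L \;=\; F_1 \circ F_2 \circ \dots \circ F_k \;=\; F'_1 \circ F'_2 \circ \dots \circ F'_k
\]
are two factorizations of type $(S_1)(S_2)\dots(S_k)$. Setting $G := F_2 \circ \dots \circ F_k$ and $G' := F'_2 \circ \dots \circ F'_k$, we obtain two factorizations of $L$ of type $(S_1)(T)$, where $T := S_2 \cdot S_3 \cdots S_k$. Here I interpret ``coprime'' in the hypothesis as pairwise coprime (which is what the $k=2$ case generalizes); since $K[X_1,\dots,X_n]$ is a UFD, pairwise coprimality of $S_1,\dots,S_k$ implies that $S_1$ is coprime with the product $T$. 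The base case then forces $F_1 = F'_1$ and $G = G'$. Now $G$ has symbol $T = S_2\cdots S_k$ with $S_2,\dots,S_k$ still pairwise coprime, so the inductive hypothesis applied to $G$ yields $F_i = F'_i$ for $i=2,\dots,k$, completing the induction.

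The only genuine point to check is the multiplicativity of coprimality, i.e.\ that $\gcd(S_1,S_i)=1$ for all $i\geq 2$ implies $\gcd(S_1, S_2\cdots S_k)=1$; this is a standard UFD fact and is the mildest part of the argument. The conceptual work has already been done in Theorem~\ref{GS_non_coprime}, so here there is no real obstacle beyond setting up the induction cleanly and justifying the grouping $L = F_1 \circ (F_2 \circ \dots \circ F_k)$ as a legitimate two-factor factorization of the required type.
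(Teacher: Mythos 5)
Your proof is correct and follows essentially the same route the paper indicates: the paper only sketches the argument as "by induction on the number of factors and with the theorem of uniqueness," which is exactly the induction you carry out, with the $k=2$ coprime corollary to Theorem~\ref{GS_non_coprime} as base case and the grouping $F_1 \circ (F_2 \circ \dots \circ F_k)$ as the inductive step. Your explicit note that pairwise coprimality implies $\gcd(S_1, S_2\cdots S_k)=1$ (a UFD fact) is a useful clarification of a detail the paper leaves implicit.
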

Now, consider factorable operators as a subvariety of all the
operators in $K[D]$ that have some fixed decomposition of the
symbol.
\begin{theorem} \label{dimension}
 Consider the variety of all the operators in $K[D]$
 that have the symbol $\Sym=S_1 \dots  S_k$, $\ord(S_i)=d_i$, $\ i=1, \dots, k$.
 Then the codimension of the subvariety of the operators that have a factorization
 of the type $(S_1)(S_2) \dots (S_k)$ equals
\[
\binom{n+d-1}{n}- \sum_{i=1}^k \binom{n+d_i-1}{n}\ .
\]
\end{theorem}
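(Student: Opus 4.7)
The plan is to identify both varieties with affine spaces via coefficient parameterizations and then argue that the natural composition map is generically finite, so that its image realizes the expected dimension.

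An operator $L$ of order $d$ with prescribed symbol $S_1\cdots S_k$ is determined freely by its components $L_0,\dots,L_{d-1}$ of order $<d$; each $L_i$ is a $K$-linear combination of the $\binom{n+i-1}{n-1}$ monomials $D^J$ with $|J|=i$, and summing over $i$ yields
\[
\dim V \;=\; \sum_{i=0}^{d-1}\binom{n+i-1}{n-1} \;=\; \binom{n+d-1}{n}
\]
for the ambient variety $V$, by the hockey-stick identity. Exactly the same count applied to each factor $F_i$, which is free in its components of order $<d_i$, shows that the parameter space $W$ of tuples $(F_1,\dots,F_k)$ with $\Sym_{F_i}=S_i$ is affine of dimension $\sum_{i=1}^k \binom{n+d_i-1}{n}$. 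The subvariety in question is the image $\phi(W)$ of the composition morphism $\phi: W \to V$ defined by $(F_1,\dots,F_k)\mapsto F_1\circ\cdots\circ F_k$.

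To obtain the claimed codimension it suffices to prove $\dim\phi(W)=\dim W$, i.e.\ that $\phi$ is generically finite. When the factors $S_1,\dots,S_k$ are pairwise coprime this is immediate: Theorem \ref{GS_unique_k_factors} gives outright injectivity of $\phi$, and subtracting $\dim W$ from $\dim V$ yields the stated formula.

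The main obstacle is the non-coprime case, where a fiber of $\phi$ may be positive-dimensional over special operators --- as the one-parameter family of factorizations in the Blumberg--Landau example preceding Theorem \ref{GS_non_coprime} illustrates. My plan here is to apply Theorem \ref{GS_non_coprime} inductively on $k$, peeling off one factor at a time: in each two-factor reduction, the partial factorization of order $d-d_0$ determines the extension to a complete factorization uniquely up to lower-order terms, so the locus on which the fiber of $\phi$ jumps is confined to a proper closed subvariety of $V$. By upper-semicontinuity of fiber dimension the generic fiber of $\phi$ is then finite, whence $\dim\phi(W)=\dim W$, completing the codimension count. The delicate point, which I expect to require the most care, is verifying that this ``exceptional'' locus is genuinely proper and not all of $\phi(W)$; making the inductive step rigorous will likely use that any $(F_1,\dots,F_k)$ with pairwise coprime symbols lies in $W$ and maps to an $L$ with generic coefficients, providing at least one point of $\phi(W)$ with a one-element fiber.
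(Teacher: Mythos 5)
Your treatment of the pairwise coprime case is essentially the same argument as the paper's, just phrased globally rather than level by level. The paper also identifies the ambient space and the factor parameter space as affine spaces of dimensions $\binom{n+d-1}{n}$ and $\sum_i\binom{n+d_i-1}{n}$ (via Lemma \ref{card}), and it also reduces the codimension count to uniqueness of the factorization: Lemma \ref{main} is precisely the statement that at each level $t$ the linear system (\ref{P_t=(Sym_L/S_1)}) has full column rank, which aggregated over all $t$ is Theorem \ref{GS_unique_k_factors}. So in the coprime case you and the paper agree.

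The genuine gap is in your proposed extension to the non-coprime case, which would not go through --- and in fact the paper does not claim it. Note that Lemma \ref{main}, the only tool the paper's proof invokes, explicitly requires $S_1,\dots,S_k$ to be \emph{pairwise coprime}; the hypothesis is implicit in Theorem \ref{dimension}. Your inductive argument via Theorem \ref{GS_non_coprime} misreads what that theorem provides. It says that a $(d-d_0)$th order \emph{partial factorization} extends uniquely to a complete factorization --- but the partial factorization of order $d-d_0$ is not determined by $L$ alone. At level $t=d-d_0$ the linear equation in the components $G^j_{d_j-d_0}$ has a kernel of positive dimension precisely when $d_0>0$, and that kernel parametrizes a family of partial factorizations, each of which may extend. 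Hence the fiber of $\phi$ over a factorable $L$ is generically positive-dimensional, not finite. The Blumberg--Landau example is not confined to a ``proper exceptional locus'': the one-parameter family $f_1(y)$ is exactly the manifestation of this kernel, and it is the generic behavior for non-coprime symbol factors. (The same phenomenon appears already for ordinary operators $L=D^2+aD+b$ with $S_1=S_2=X$: setting $h=a-g$, the remaining constraint is the Riccati equation $g'=b-ag+g^2$, whose solution set is a one-parameter family.) Consequently $\dim\phi(W)<\dim W$ and the claimed codimension formula fails in the non-coprime case, so the semicontinuity argument you hope to run cannot close the gap. You should simply add the pairwise-coprimality hypothesis to the statement and drop the non-coprime branch of the argument.
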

\begin{proof}
 Consider the problem of the factorization of $L$
 of the type $(S_1)(S_2) \dots (S_k)$ in the general form:
\begin{equation} \label{L=  ( S_1+}
L=  \Big( S_1+ \sum_{i=0}^{d_1-1}G_i^1 \Big) \circ  \dots  \circ
\Big( S_k+ \sum_{i=0}^{d_k-1}G_i^k \Big),
\end{equation}
 where $G_i^j$ denotes the $i$-th component in the $j$-th factor.
 Compare components of orders $t$, $\ 0 \leq t \leq \ord(L)-1$ on both sides
of (\ref{L=  ( S_1+}), then we have
\begin{equation} \label{P_t=(Sym_L/S_1)}
P_t=(\Sym/S_1) \cdot G_{t-d+d_1}^{1} + \dots + (\Sym/S_k) \cdot
G_{t-d+d_k}^{k},
\end{equation}
 where $P_{t}$ is a homogeneous polynomial of order $t$, which is
 determined uniquely by the polynomials $G_i$, $H_j$, $\ i>t-k_1$, $\ j>t-k_2$,
 and so it is known if we solve equations
 (\ref{P_t=(Sym_L/S_1)}) in ``descent'' order, that is if we start with
 $t=\ord(L)-1$, and reduce $t$ by one at each succeeding step.

 Polynomials $G_i, H_j, \ i>t-k_1, \ j>t-k_2$, and so $P_{t}$
are determined uniquely: it is an immediate consequence of the
following lemma:
\begin{lemma} \label{main}
 Let $S_1, \dots, S_k$  are pairwise coprime homogeneous polynomials of
 orders $d_1, \dots,d_k$ respectively.
 Denote $S=S_1 \dots S_k.$
 Then there is at most one tuple $(A_1, \dots, A_k)$ such that
\begin{equation} \label{P_t_S_i}
P_t=(S/S_1) \cdot A_{1} + \dots + (S/S_k) \cdot A_{k},
\end{equation}
where $\ord(P_t)=t, \ t< \ord(S),$ and $ \ord(A_i)+ \ord(S/S_i)=t.$
\end{lemma}
\begin{proof}
 Assume we have two such tuples: $(A'_1, \dots, A'_k)$ and $(A''_1, \dots, A''_k)$.
 Consider the difference of the equations corresponding to them, so
 we have
\begin{equation} \label{01}
0=(S/S_1) \cdot B_{1} + \dots + (S/S_k) \cdot B_{k},
\end{equation}
 where $B_i=A'_i-A''_i$, $i=1, \dots, k$.
  Without loss of generality
 we may assume $B_1 \neq 0$ and rewrite equation (\ref{01}) in the form
\[
-(S/S_1) \cdot B_{1}= (S/S_2) \cdot B_{2} + \dots + (S/S_k) \cdot
B_{k}.
\]
 Every component on the right is divisible by $S_1,$
 while $(S/S_1)$ is not so.
 Thus, $B_{1}$ is divisible by $S_1$,
 and so $\ord(B_1) \geq \ord(S_1).$

 On the other hand, we have $\ord(A_i)+\ord(S/S_i)=t$
 and $t< \ord(S),$ that is $\ord(A_i)< \ord(S_i),$ and so $ \ord(B_1)< \ord(S_i)$.
 This is a contradiction with the results of the previous paragraph.
\end{proof}

 The factorization exists if the system of all the equations (\ref{P_t=(Sym_L/S_1)}), $t=d-1, \dots, 0$
 is compatible.
 The codimension equals the number of independent equations
 in the coefficients of the operator.

 For every $t$ we have the linear equation (\ref{P_t=(Sym_L/S_1)})
 in the polynomials $G_{t-d+d_1}^{1}, \dots, G_{t-d+d_k}^{k}$,
 which is equivalent to the system of linear equations in their coefficients.
 Let the system be  $A \cdot \vec{g} =\vec{c}$, where $A$ is the matrix of the system.
 The system has a unique solution,
 and so the rank of the matrix $A$ equals the number $v$ of variables.
That is the columns of the matrix $A$ are linearly independent.

 The system $A \cdot \vec{g} =\vec{c}$ is compatible
 when vector $\vec{c}$ belongs to a $v$-dimensional affine space,
 generated by the columns of $A$.
 The length of vector $\vec{c}$ equals the number of equations in
 the system.
 Thus the codimension of the solution space is the difference
 between the number of equations and the number of variables.

 Now the codimension of the variety of all the operators
 that have factorizations of the type $(S_1)(S_2) \dots (S_k)$
 equals the difference between the number of equations
 and the number of variables at all the steps together.
 This can be computed using the following combinatorial fact:
\begin{lemma} \label{card} The cardinality of the set
\[
 \{ M=x_1^{d_1} \dots x_n^{d_n} \mid d_1+ \dots +d_n = t \}
\]
of monomials in $n$ independent variables $x_1, \dots, x_n$ is $
\binom{n+t-1}{t}=\binom{n+t-1}{n-1}.$
\end{lemma}
The theorem about codimension is proved.
\end{proof}
\begin{example}
 Consider all the operators of order two in two independent
 variables with symbol $S_1 \cdot S_2$,
 where $S_1, S_2$ are coprime homogeneous operators of the first
 order.
 By Theorem \ref{dimension}, the codimension of the variety of all
 the operators that have a factorization of the type $(S_1)(S_2)$,
 is $1$.

 One may find explicit formulae for the equation which defines
 this variety. Let, for example,
 $\widehat{S}_1=D_1, \ \widehat{S}_2=D_2$.
 Consider all the operators of the form
 $L=D_1 D_2 + a_{10} D_1 + a_{01} D_2+ a_{00}$.
 Such an operator has a factorization of the type $(S_1)(S_2)$
 if and only if
 coefficients $a_{10}, a_{01}, a_{00}$ satisfy the condition
\[
a_{00}- a_{10} a_{01} - \partial_x(a_{10})=0.
\]
\end{example}
\begin{example}
 Consider all the operators of order three in two independent
 variables with symbol $S_1 \cdot S_2$,
 where $S_1, S_2$ are coprime homogeneous operators of first
 and second orders respectively.
 By Theorem \ref{dimension}, the codimension of the variety of all
 the operators that have a factorization of the type $(S_1)(S_2)$,
 is $2$.

However, if we consider a factorization of the type $(S_1)(S_2)(S_3)$,
 where $S_1, S_2, S_3$ are coprime homogeneous operators of the first
 order, then, by Theorem \ref{dimension}, the codimension is $3$.
\end{example}
  To study operators that have no factorization of some type,
 we introduce the following notion:
\begin{definition}
 Let $L \in K[D]$, $\Sym_L=S_1 \dots S_k$.
 An operator $R \in K[D]$ is called a \emph{common obstacle} to
 factorization of the type $(S_1)(S_2) \dots (S_k) $ if there exists
 a factorization of this type for the operator $L-R$ and $R$ has
 minimal possible order.
\end{definition}
Common obstacles are closely related to partial factorizations:
\begin{proposition}
 Let $L \in K[D], \ \Sym_L=S_1 \dots S_k$.
 A common obstacle to a factorization of the type $(S_1)\dots(S_k)$
 is of order $t$ if and only if the minimal order of a partial
 factorization of this type is $t+1$.
\end{proposition}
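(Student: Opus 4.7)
The plan is to set up the natural bijection between operators of the form $F_1\circ\dots\circ F_k$ (with $\Sym_{F_i}=S_i$) and their ``residuals'' $R:=L-F_1\circ\dots\circ F_k$. Under this dictionary, the defining inequality $\ord(L-F_1\circ\dots\circ F_k)<t'$ for a partial factorization of order $t'$ is literally $\ord(R)<t'$, so an obstacle of order $t$ corresponds precisely to a partial factorization of order $t+1$, which is the smallest admissible $t'$ once $\ord(R)=t$ is fixed.

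For the ``only if'' direction I would take a common obstacle $R$ of minimal order $t$ and use the decomposition $L-R=F_1\circ\dots\circ F_k$ guaranteed by the definition. Since $\Sym_{F_i}=S_i$ and $\ord(L-F_1\circ\dots\circ F_k)=\ord(R)=t<t+1$, this composition is a partial factorization of type $(S_1)\dots(S_k)$ of order $t+1$. To rule out a smaller partial factorization, I would argue by contradiction: if $F'_1\circ\dots\circ F'_k$ were one of order $t'\le t$, then its residual $R':=L-F'_1\circ\dots\circ F'_k$ would have $\ord(R')<t'\le t$, while $L-R'$ would still admit a factorization of the required type, contradicting the minimality of $\ord(R)$.

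The ``if'' direction is the mirror image. Starting from a partial factorization of minimal order $t+1$, I set $R:=L-F_1\circ\dots\circ F_k$; then $\ord(R)\le t$ and $L-R$ factors of the right type, so a common obstacle of order at most $t$ exists. Any common obstacle $R'$ with $\ord(R')<t$ would, via the same construction, supply a partial factorization of order $\ord(R')+1\le t$, contradicting the minimality of $t+1$. Hence the minimal obstacle order is exactly $t$.

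I do not expect any real obstacle in executing this; the argument is a direct translation between the two definitions via the residual $R\leftrightarrow L-R$. The only point requiring minor care is the degenerate situation in which $L$ itself factors as $F_1\circ\dots\circ F_k$: there the minimal obstacle is $R=0$ of order $-\infty$ and the minimal partial factorization has order $0$, consistent with the convention $\ord(0)=-\infty$ fixed in Section~2.
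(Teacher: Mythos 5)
Your proof is correct, and since the paper states this proposition without supplying any proof, there is nothing to compare it against; your definitional translation via the residual $R \leftrightarrow L-F_1\circ\dots\circ F_k$ is exactly the natural (and essentially the only) argument, and both directions are handled cleanly. One small caveat on your closing remark: in the degenerate case where $L$ factors exactly, the common obstacle $R=0$ has order $-\infty$, and the proposition's claimed correspondence would give a partial factorization of order $-\infty+1$, not $0$; so that case is really outside the scope of the statement (which implicitly assumes $t\ge 0$), rather than being ``consistent'' with the $\ord(0)=-\infty$ convention as you suggest — but this is a boundary convention the paper itself leaves unaddressed and does not affect the substance of your argument.
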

 Common obstacles and their symbols are not unique in general,
 and neither of them is invariant or has some interesting properties.
 That is why we introduce the following notion.
\begin{definition} \label{ost_ring_k}
  Let $L \in K[D]$ and $\Sym_L=S_1 \cdot S_2 \cdot \dots \cdot S_k$.
  Then we say that the {\it ring of obstacles} to factorizations of the type
  $(S_1)\dots(S_k)$ is the factor ring
\[
K(S_1, \dots, S_k)= K[X] / I,
\]
 where
\[
I=\left( \frac{\Sym_L}{S_1},  \dots, \frac{\Sym_L}{S_k} \right)
\]
 is a homogeneous ideal.
\end{definition}
\begin{remark}
In the case of two factors ($k=2$), the ring of obstacles is
\[
K(S_1, S_2)= K[X] / (S_1,S_2).
\]
So the definition \ref{ost_ring_k} is a generalization of the
definition given in \cite{spain}, where we study the case of
factorizations into two factors.
\end{remark}
\begin{theorem} \label{unique_in_ring_arb_factors}
 Let $L \in K[D]$ and
 $\Sym_L=S_1 \cdot S_2 \dots  S_k,$
 where $S_i, \ i \in \{1, \dots, k\}$ are pairwise coprime.
 Then the symbols of all common obstacles to factorization
 of the type $(S_1)  \dots (S_k)$  belong to the same class in the
 factor-ring $K(S_1, \dots, S_k)$.
\end{theorem}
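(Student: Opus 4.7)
The plan is to take two arbitrary common obstacles $R$ and $R'$ to the factorization of the type $(S_1)\dots(S_k)$, which by minimality have the same order $t$, and to exhibit $\Sym_R - \Sym_{R'}$ as an element of the ideal
\[
I=\bigl(\Sym_L/S_1,\,\dots,\,\Sym_L/S_k\bigr).
\]

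First I would unpack the definition: there exist complete factorizations $L-R=F_1\circ\cdots\circ F_k$ and $L-R'=F'_1\circ\cdots\circ F'_k$ of the prescribed type. Since $\ord(R)=\ord(R')=t$, each of these compositions is a partial factorization of $L$ of order $t+1$ in the sense of Definition \ref{partial_fact_k_factors}.

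Second, because the $S_i$ are pairwise coprime, I would invoke uniqueness of partial factorizations up to lower-order terms: for every $i$,
\[
\ord(F_i - F'_i)\le t - d + d_i.
\]
For $k=2$ this is Corollary \ref{unique_partial_coprime}; for general $k$ the same bound is obtained by iterating Lemma \ref{main} from order $d-1$ downward, exactly as in the proof of Theorem \ref{dimension}. This is the step where the coprimality hypothesis is actually used, and it is the main technical obstacle because the corollary is only stated for two factors.

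Third, I would rewrite $R-R'$ as the telescoping sum
\[
R-R' \;=\; \sum_{i=1}^{k} F'_1\circ\cdots\circ F'_{i-1}\circ (F'_i-F_i)\circ F_{i+1}\circ\cdots\circ F_k.
\]
Each summand has order at most $d_1+\cdots+d_{i-1}+(t-d+d_i)+d_{i+1}+\cdots+d_k = t$, and whenever it actually attains order $t$ its symbol equals $(S_1\cdots S_{i-1})\cdot\Sym_{F'_i-F_i}\cdot(S_{i+1}\cdots S_k)$, a multiple of $\Sym_L/S_i$. Hence $\Sym_{R-R'}\in I$.

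Finally, since $\ord(R-R')\le t$, either $\Sym_{R-R'}=\Sym_R-\Sym_{R'}$ (when equality holds in the order) or else $\Sym_R=\Sym_{R'}$ already. In either case $\Sym_R\equiv\Sym_{R'}\pmod{I}$, which is the claim. The only subtle points beyond the uniqueness input above are the bookkeeping of orders in the telescoping expansion and the separate treatment of the degenerate case $\ord(R-R')<t$; both are routine.
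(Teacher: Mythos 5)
Your argument is correct and rests on the same key input as the paper's proof: the iterated application of Lemma \ref{main} (exactly as in the proof of Theorem \ref{dimension}) to show that the components of the factors of orders above $t-d+d_i$ are uniquely determined, so that any two complete factorizations $L-R=F_1\circ\cdots\circ F_k$ and $L-R'=F'_1\circ\cdots\circ F'_k$ satisfy $\ord(F_i-F'_i)\le t-d+d_i$. The paper organizes the conclusion slightly differently --- it reads off equation (\ref{P_t=(Sym_L/S_1)}) at order $t$ to write $\Sym_R=P_t-\big((\Sym_L/S_1)G^1_{t-d+d_1}+\cdots+(\Sym_L/S_k)G^k_{t-d+d_k}\big)$ with $P_t$ the same for all obstacles, thereby naming the class $[P_t]$ explicitly --- whereas your telescoping comparison of two obstacles reaches the same congruence $\Sym_R\equiv\Sym_{R'}\pmod{I}$ without singling out a representative; the two are equivalent in content.
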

\begin{proof} Denote $d_i=\ord(S_i)$, $\ i \in \{1, \dots, k \}$ and
let $t$ be the order of common obstacles. In the same way as in the
proof of Theorem \ref{dimension}, we obtain the equation
(\ref{P_t=(Sym_L/S_1)}), that is the symbol of every common obstacle
can be written in the form
\[
P_{t} - ((\Sym_L/S_1) \cdot G_{t-d+d_1}^{1} + \dots + (\Sym_L/S_k)
\cdot G_{t-d+d_k}^{k}),
\]
where $P_t$ is known, uniquely determined and the same for all
common obstacles polynomial. Thus all common obstacles belong to the
class $[P_t]$ of the factor-ring $K(S_1, \dots, S_k)$.
\end{proof}
\begin{definition}
We say that the class of common obstacles in the ring of obstacles
is the \emph{obstacle to factorization}.
\end{definition}
\begin{remark}
 Every element of this class is again a common obstacle.
\end{remark}
\begin{definition} We say that two types of factorizations
  $(S_1) \dots (S_k)$ and $(b_1 S_1) \dots (b_k S_k)$
  are \emph{similar}, if $b_1, \dots, b_k \in K$ and $b_1 \dots b_k=1$.
\end{definition}
\begin{theorem} \label{Similar_types}
For an operator in $K[X]$ the rings of obstacles and the obstacles
of similar types are the same.
\end{theorem}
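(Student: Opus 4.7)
The plan is to split the statement into two separate claims: (i) the rings of obstacles $K(S_1,\dots,S_k)$ and $K(b_1S_1,\dots,b_kS_k)$ coincide as factor rings of $K[X]$, and (ii) the class of common obstacles (the obstacle itself) is the same inside this common ring. Claim (i) is essentially a bookkeeping computation. Because $b_1\cdots b_k=1$, the product $(b_1S_1)\cdots(b_kS_k)$ still equals $\Sym_L$, so the two ideals are defined from the same ambient polynomial. The generators of the second ideal satisfy $\Sym_L/(b_iS_i)=b_i^{-1}(\Sym_L/S_i)$, and the relation $b_1\cdots b_k=1$ forces every $b_i$ to be invertible in $K$; hence each new generator is a unit multiple of the corresponding old one, so the two ideals coincide and $K(S_1,\dots,S_k)=K(b_1S_1,\dots,b_kS_k)$.

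For claim (ii), I would set up a bijection between factorizations of $L-R$ of the two similar types. Given a factorization $L-R=F_1\circ\cdots\circ F_k$ with $\Sym_{F_i}=S_i$, define $F_i':=b_iF_i$; these have the right leading symbols $b_iS_i$, and in the basic setting where the $b_i$ commute with operator composition one gets $F_1'\circ\cdots\circ F_k'=(b_1\cdots b_k)F_1\circ\cdots\circ F_k=L-R$ directly, so $R$ remains a common obstacle for the similar type. The reverse rescaling $F_i=b_i^{-1}F_i'$ is identical, and minimality of order is preserved by symmetry; hence the sets of common obstacles for the two types agree, and in particular they determine the same class in the common ring of obstacles.

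The main obstacle is that when the $b_i\in K$ are not annihilated by the derivations, Leibniz corrections appear when an $F_i$ is composed with $b_{i+1}$, so $F_1'\circ\cdots\circ F_k'$ no longer equals $F_1\circ\cdots\circ F_k$ on the nose. I would handle this at the level of symbols, using the explicit expression for the symbol of a common obstacle supplied by the proof of Theorem \ref{unique_in_ring_arb_factors}: the polynomial $P_t$ in equation (\ref{P_t=(Sym_L/S_1)}) is built from summands $(\Sym_L/S_i)\cdot G^i_{t-d+d_i}$. Under the substitutions $S_i\mapsto b_iS_i$ and correspondingly $G^i\mapsto b_iG^i$, the ratio $\Sym_L/(b_iS_i)$ absorbs the $b_i^{-1}$ and each summand is preserved, while every Leibniz correction term produced by pushing $b_j$'s through the factors is visibly a multiple of some $\Sym_L/S_i$ and therefore lies in $I=I'$. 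Thus $[P_t']=[P_t]$ in $K[X]/I$, which identifies the two obstacle classes and completes the proof.
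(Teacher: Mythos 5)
Your claim (i) is correct and matches the paper: $b_1\cdots b_k=1$ forces each $b_i$ to be a unit of $K$, so the generators $\Sym_L/(b_iS_i)=b_i^{-1}(\Sym_L/S_i)$ of the new defining ideal are unit multiples of the old ones, and the two ideals (hence the two factor rings) coincide.

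Claim (ii) has a gap. You correctly diagnose that the naive substitution $F_i'=b_iF_i$ fails when the $b_i$ are not constants: $F_1'\circ\cdots\circ F_k'$ differs from $F_1\circ\cdots\circ F_k$ by Leibniz corrections. But the proposed repair --- that ``every Leibniz correction term \ldots is visibly a multiple of some $\Sym_L/S_i$ and therefore lies in $I$'' --- is not a proof and is not true in the form stated. For instance with $k=2$, $F_1=D_x+a$, $F_2=D_y+b$ and $b_1=g$, $b_2=g^{-1}$, one computes $(gF_1)\circ(g^{-1}F_2)=F_1\circ F_2+g\,\partial_x(g^{-1})\,D_y+g\,\partial_x(g^{-1})\,b$, and the order-zero correction $g\,\partial_x(g^{-1})\,b$ is not in $(X,Y)$. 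So tracking corrections of the naive rescaling does not land in the ideal, and the argument does not close.

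The idea you are missing, which is what the paper's one-line assertion ``there exist $T_1',\dots,T_k'$ such that \ldots'' hides, is a telescoping conjugation rather than a rescaling. Put $c_0=1$ and $c_i=b_1\cdots b_i$ for $i\ge 1$, so that $c_k=b_1\cdots b_k=1$, and set
\[
G_i \;=\; c_{i-1}^{-1}\circ F_i\circ c_i \;=\; c_{i-1}^{-1}\circ(\widehat S_i+T_i)\circ c_i .
\]
Then $\Sym_{G_i}=c_{i-1}^{-1}c_i\,S_i=b_iS_i$, and since consecutive $c_i$, $c_i^{-1}$ cancel and $c_0=c_k=1$,
\[
G_1\circ\cdots\circ G_k \;=\; c_0^{-1}\circ F_1\circ\cdots\circ F_k\circ c_k \;=\; F_1\circ\cdots\circ F_k .
\]
Thus the \emph{same} operator $P=L-F_1\circ\cdots\circ F_k$ is also $L-G_1\circ\cdots\circ G_k$, with the $G_i$ of the similar type; by symmetry of the rescaling the minimal orders agree, so the sets of common obstacles of the two types literally coincide as operators. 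Combined with your (correct) claim (i) that the factor rings coincide, the obstacle classes coincide, which is the theorem. In short: replace ``rescale and track corrections'' by ``insert $c_i^{-1}\circ c_i$ between factors,'' and your proof goes through.
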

\begin{proof}
 Consider an operator $L \in K[D]$ and two similar types
 of factorizations of $L$:
 $(S_1) \dots (S_k)$ and
 $(b_1 S_1) \dots (b_k S_k)$, where
 $b_i \in K, \ i=1, \dots, k$.
 Then the homogeneous ideals $(S_1, \dots ,S_k)$ and $(b_1 S_1, \dots ,b_k S_k)$
 are the same, thus the rings of obstacles are also.

 Every common obstacle of the type $(S_1) \dots (S_k)$
 and of order $d_0$
 may be written as
\begin{equation} \label{obstd_0}
 P=L-(\widehat{S}_1 + T_1) \circ  \dots  \circ (\widehat{S}_k + T_k),
\end{equation}
 where $T_i$ is the sum of components of orders $d_i-1, \dots,
 d-d_i-d_0+1$, and $\ord(P)= d_0$.

 There exist $T'_1, \dots, T'_k$ such that
 $T'_i$ is the sum of components of orders  $d_i-1, \dots,
 d-d_i-d_0+1$ and
\[
(S_1 + T_1)  \circ  \dots  \circ (S_k + T_k)=(b_1 S_1 + T'_1)  \circ
\dots  \circ ( b_k S_k + T'_k).
\]
 Thus $P$ is a common obstacle of order $d_0$  of the type
 $(b_1 S_1) \dots (b_k S_k)$. On the other hand, we know that the rings of obstacles
 $K(S_1, \dots ,S_k)$ and $(b_1 S_1, \dots ,b_k S_k)$ are the same.
 Thus obstacles are the same also.
\end{proof}

 Let us recall the definition:
\begin{definition}
 A gauge transformation of $L \in K[D]$ with an
 invertible element in $g \in K$ is the operator
 $g^{-1} \circ L \circ g$.
\end{definition}
\begin{theorem} \label{conj_preserve_obst}
 Let $P$ be a common obstacle for $L \in K[D]$,
 then $g^{-1} P g$ will be a common obstacle for the gauge transformed operator $g^{-1} L g$,
 where $g \in K^*$ ($K^*$ - the set of invertible elements in $K$).
\end{theorem}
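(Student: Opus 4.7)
The plan is to transport the factorization witnessing that $P$ is a common obstacle through the gauge transformation, then use invertibility of the gauge to get the minimality of the order for free by a symmetry argument.

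First I would unwind the definition: since $P$ is a common obstacle for $L$, there exist factors $F_1,\dots,F_k$ with $\Sym_{F_i}=S_i$ such that
\[
L - P = F_1 \circ \dots \circ F_k,
\]
and no $P'$ with $\ord(P') < \ord(P)$ admits such a factorization of the given type. Conjugating both sides by $g$ and inserting the identity $g g^{-1}$ between consecutive factors gives
\[
g^{-1} L g - g^{-1} P g = (g^{-1} F_1 g) \circ (g^{-1} F_2 g) \circ \dots \circ (g^{-1} F_k g).
\]
So I need two easy facts: conjugation by $g \in K^*$ is an algebra automorphism of $K[D]$ (immediate from $g g^{-1}=1$ and associativity), and conjugation preserves both the order and the symbol of every operator. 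The latter follows because $g^{-1}\circ D_j \circ g = D_j + g^{-1}\partial_j(g)$, so the top-order part is unchanged while only lower-order terms are added; by induction on order, $\Sym_{g^{-1}Ag} = \Sym_A$ and $\ord(g^{-1}Ag)=\ord(A)$ for every $A\in K[D]$. In particular the factors $g^{-1} F_i g$ still have symbols $S_i$, so the displayed identity exhibits $g^{-1}Pg$ as the discrepancy of an honest factorization of $g^{-1}Lg$ of the same type $(S_1)\dots(S_k)$, and $\ord(g^{-1}Pg) = \ord(P)$.

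It remains to check that $g^{-1} P g$ has the minimal possible order among such discrepancies; this is the only step with any real content, and it is handled by the invertibility of the gauge transformation. Suppose for contradiction that there were a common obstacle $P'$ for $g^{-1}Lg$ of the same type with $\ord(P') < \ord(g^{-1} P g) = \ord(P)$. Applying the same argument in reverse to the operator $g^{-1} L g$ with gauge element $g^{-1}\in K^*$, we transport the witnessing factorization of $g^{-1}Lg - P'$ to a factorization of $L - g P' g^{-1}$ of the type $(S_1)\dots(S_k)$. Since conjugation preserves order, $\ord(g P' g^{-1}) = \ord(P') < \ord(P)$, contradicting the minimality of $P$ as a common obstacle for $L$. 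Hence $g^{-1} P g$ is a common obstacle for $g^{-1} L g$, which is exactly the claim.
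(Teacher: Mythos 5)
Your proof is correct and the core idea is the same as the paper's---transport the factorization of $L-P$ through the gauge transformation---but you execute it differently in two worthwhile respects. The paper absorbs $g^{-1}$ into only the leftmost factor and $g$ into only the rightmost, producing a factorization of the \emph{similar} type $(g^{-1}S_1)(S_2)\dots(S_{k-1})(gS_k)$, and therefore silently relies on Theorem~\ref{Similar_types} to finish; you instead insert $gg^{-1}$ between every pair of consecutive factors and conjugate each one, which (since $\Sym_{g^{-1}F_ig}=\Sym_{F_i}=S_i$) yields a factorization of the \emph{exact} same type $(S_1)\dots(S_k)$ and avoids the detour through similar types entirely. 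More importantly, the paper's proof stops once it has exhibited a factorization of $g^{-1}Lg - g^{-1}Pg$; it never verifies that $g^{-1}Pg$ has minimal order among such discrepancies, which is the defining property of a common obstacle. You supply that missing step with a clean symmetry argument: any competitor $P'$ of strictly smaller order for $g^{-1}Lg$ would, upon conjugating back by $g^{-1}$, produce a discrepancy $gP'g^{-1}$ for $L$ of the same smaller order, contradicting minimality of $P$. Your version is therefore more self-contained and more complete than the one in the paper.
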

\begin{proof}
 Consider a common obstacle (\ref{obstd_0}) for $L$
 of order $d_0$. Then we have
\[
g^{-1} P g = g^{-1}L g -  g^{-1} \circ (S_1 + T_1)  \circ
\Pi_{j=2}^{k-1} (S_i+T_i)  \circ (S_k + T_k)  \circ g.
\]
 There exist $T'_1, \dots, T'_k$ such that $T'_i$ is the sum of
 components of orders $d_i-1, \dots,d-d_i-d_0+1$ and
\[
g^{-1} P g = g^{-1}L g -  (g^{-1} S_1 + T'_1)  \circ \Pi_{j=2}^{k-1}
(S_i+T'_i)  \circ (g S_k + T'_k).
\]
\end{proof}

\begin{corollary} \label{conj_preserve_obst_class}
 Obstacles are invariant under the gauge transformations.
\end{corollary}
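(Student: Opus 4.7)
The plan is to reduce the corollary to Theorem~\ref{conj_preserve_obst} together with one elementary symbol identity: for any $Q \in K[D]$ and any $g \in K^*$, one has $\Sym_{g^{-1} \circ Q \circ g} = \Sym_Q$. This holds because $g^{-1} \circ D_i \circ g = D_i + g^{-1}\partial_i(g)$ has leading term $D_i$, so an induction on the order of monomials shows that gauge conjugation perturbs only strictly lower-order components of $Q$.

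Applying this observation to $L$ itself gives $\Sym_{g^{-1} L g} = \Sym_L = S_1 \cdots S_k$. Hence the homogeneous ideal $I=(\Sym_L/S_1,\dots,\Sym_L/S_k)$, and therefore the ring of obstacles $K(S_1,\dots,S_k)$, is literally the same object for $L$ and for $g^{-1} L g$. This is what legitimizes the comparison of the two obstacle classes inside a single ring.

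Now let $P$ be a common obstacle for $L$, so that its symbol represents the obstacle class of $L$. By Theorem~\ref{conj_preserve_obst}, $g^{-1} P g$ is a common obstacle for $g^{-1} L g$, and by Theorem~\ref{unique_in_ring_arb_factors} its symbol represents the obstacle class of $g^{-1} L g$. Applying the symbol identity a second time yields $\Sym_{g^{-1} P g} = \Sym_P$, so these two classes coincide in $K(S_1,\dots,S_k)$, proving gauge invariance. The whole chain is short because the heavy lifting sits in Theorem~\ref{conj_preserve_obst}; the only step worth double-checking is the symbol-preservation identity for conjugation, but since $g$ has order zero the commutator $[D^J,g]$ has order strictly less than $|J|$, which is exactly what the induction needs.
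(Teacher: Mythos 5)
Your proof is correct and follows essentially the same route as the paper's, which compresses the argument into one line: under gauge transformation common obstacles are conjugated, hence their symbols coincide. You have usefully made explicit the two steps the paper leaves implicit, namely that conjugation by $g\in K^*$ preserves symbols (so the ring of obstacles for $g^{-1}Lg$ is literally the same as that for $L$) and the appeal to Theorem~\ref{unique_in_ring_arb_factors} to pass from a single common obstacle to the well-defined obstacle class.
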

\begin{proof}
 Under the gauge transformations common obstacles are conjugated, and so
 symbols of common obstacles are the same.
\end{proof}

\begin{theorem} \label{obstacle_d-1_zero}
 Let $n=2$, $\ L \in K[D]$, $\ord(L)=d$,
 and let $\Sym_L=S_1 \dots  S_k,$
 where $S_i, \ i \in \{1, \dots, k\}$ are pairwise coprime.
 Thus the ring of obstacles $K(S_1, \dots , S_k)$ is $0$ to order $d-1$.
 (That is, non-zero obstacles may be only less than or equal to $d-2$.)
\end{theorem}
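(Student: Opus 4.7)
The plan is to unwind the definition: the ring of obstacles is $K[X_1,X_2]/I$ with $I=(T_1,\dots,T_k)$, where $T_i:=\Sym_L/S_i$ is a homogeneous polynomial of degree $d-d_i$. The assertion that $K(S_1,\dots,S_k)$ is $0$ up to order $d-1$ means exactly that $I_{d-1}=K[X_1,X_2]_{d-1}$, i.e.\ every homogeneous polynomial of degree $d-1$ lies in $I$ (and then automatically everything of higher degree does too, by multiplying by monomials). Establishing this equality is the whole task.

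First I would introduce the natural $K$-linear multiplication map
\[
\mu:\bigoplus_{i=1}^{k} K[X_1,X_2]_{d_i-1}\;\longrightarrow\; K[X_1,X_2]_{d-1},\qquad (p_1,\dots,p_k)\longmapsto \sum_{i=1}^{k} p_i\,T_i,
\]
whose image is precisely $I_{d-1}$. Since $n=2$, Lemma \ref{card} gives $\dim K[X_1,X_2]_t=t+1$, so the source has dimension $\sum_{i=1}^k d_i=d$ and the target also has dimension $d$. Hence it suffices to prove that $\mu$ is injective.

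Second, I would prove injectivity using UFD arithmetic in $K[X_1,X_2]$ together with the pairwise coprimality of the $S_i$. Suppose $\sum_i p_i T_i=0$ with $\deg p_i=d_i-1$. Fix an index $j$: every $T_i$ with $i\neq j$ is divisible by $S_j$, so isolating $p_j T_j$ yields $S_j\mid p_j T_j$. Since $T_j=\prod_{i\neq j}S_i$ and $S_j$ is coprime to every factor, $\gcd(S_j,T_j)=1$, hence $S_j\mid p_j$. But $\deg p_j=d_j-1<d_j=\deg S_j$ forces $p_j=0$, and since $j$ was arbitrary the map is injective.

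The genuinely delicate point is the matching of dimensions, and this is where the hypothesis $n=2$ enters in an essential way: only for $n=2$ do the binomial dimensions $\binom{n+t-1}{n-1}$ collapse to $t+1$, allowing $\sum d_i=d$ on the source side to balance the target dimension. For $n\ge 3$ the analogous map is far from surjective, consistent with the strictly positive codimension computed in Theorem \ref{dimension}. With injectivity in hand, the dimension count promotes $\mu$ to an isomorphism, so $I_{d-1}=K[X_1,X_2]_{d-1}$ and the theorem follows.
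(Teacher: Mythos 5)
Your proof is correct and takes essentially the same approach as the paper: both set up the linear system obtained by comparing degree-$(d-1)$ components (your map $\mu$ is exactly equation (\ref{P_t=(Sym_L/S_1)}) at $t=d-1$), prove injectivity via the coprimality-and-degree-bound argument (the paper invokes Lemma~\ref{main}, whose proof is the same computation you give), and use the dimension count $\sum_i d_i = d = \dim K[X_1,X_2]_{d-1}$ (Lemma~\ref{card}) to upgrade injectivity to surjectivity. You merely phrase the conclusion directly as $I_{d-1}=K[X_1,X_2]_{d-1}$ rather than in the paper's language of the existence of a partial factorization of order $d-1$.
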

\begin{proof}
 Denote $d_i=\ord(S_i)$, $\ i \in \{1, \dots, k \}$ and repeat
 the reasoning of the proof of Theorem \ref{dimension}.
 Thus we write equation (\ref{P_t=(Sym_L/S_1)}) for $t=d-1$:
\[
P_{d-1}=(\Sym_L/S_1) \cdot G_{d_1-1}^{1} + \dots + (\Sym_L/S_k)
\cdot G_{d_k-1}^{k}.
\]
 It has at most one solution w.r.t. $G_{d_1-1}^{1}, \dots, G_{d_k-1}^{k}$.
   Consider the corresponding system of equations in their
 coefficients. By Lemma \ref{card} the number of equations in this
 system is $d$, the number of variables is $d$ also.
   Thus the system has a unique solution,
 and so we have a partial factorization of order $d-1$.
\end{proof}

   Recall that an operator $L \in K[D]$, $\ord(L)=d$ is called strictly
 hyperbolic if the symbol of $L$ has exactly $d$ different factors.
\begin{theorem} \label{obstacle_d-2_unique}
   Let $n=2$ and $L \in K[D]$ be  strictly hyperbolic of order $d$.
 Then for each type of factorization,
 a common obstacle is unique.
\end{theorem}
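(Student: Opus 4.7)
The strategy combines Theorem \ref{unique_in_ring_arb_factors} with the iterative extension procedure from the proofs of Theorems \ref{dimension} and \ref{obstacle_d-1_zero}, exploiting the rigidity of the strictly hyperbolic case. Since $\Sym_L$ factors into $d$ pairwise distinct linear forms, for any factorization type $(S_1)\dots(S_k)$ the factors $S_i$ are automatically pairwise coprime, so Theorem \ref{obstacle_d-1_zero} yields a unique partial factorization of order $d-1$. I would then iteratively attempt to extend it: at each step $t$, equation (\ref{P_t=(Sym_L/S_1)}) must be solved for the unknowns $G^i_{t-d+d_i}$, and Lemma \ref{main} guarantees uniqueness of the solution whenever it exists. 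Let $t^*$ be the largest $t$ for which (\ref{P_t=(Sym_L/S_1)}) has no solution; then every common obstacle has order $t^*$, and the components of the $F_i$'s of order strictly above $t^*-d+d_i$ are uniquely determined by this procedure.

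The core of the argument is to pin down the operator $R$ itself, not merely its symbol class as in Theorem \ref{unique_in_ring_arb_factors}. Given any common obstacle $R=L-F_1\circ\cdots\circ F_k$ of order $t^*$, the preceding paragraph forces the $F_i$'s to agree with the iteratively determined ones in all components of order above $t^*-d+d_i$. A second common obstacle $R'$ would correspond to a modification of the $F_i$'s in components of order at most $t^*-d+d_i$; the task is to show that any such modification either increases the order of $R$ (contradicting minimality of $\ord R=t^*$) or leaves $R$ unchanged.

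The main difficulty lies precisely in handling the components that the iterative algorithm leaves free at and below step $t^*$. Here I would exploit the strictly hyperbolic structure in $n=2$: the ideal $I=(\Sym_L/S_1,\ldots,\Sym_L/S_k)$ is $\mathfrak{m}$-primary, since the only common zero in $K^2$ of its generators is the origin (every nonzero point lies on exactly one of the distinct lines $\ell_j=0$, hence fails to annihilate the generator $\Sym_L/S_i$ for the unique $i$ with $\ell_j\mid S_i$). Combined with the dimension restriction $n=2$, this rigidity should rule out nontrivial modifications of the low-order components: any change to some $G^i_j$ propagates through the composition into a change at the symbol level at some higher step that was already fixed by the iterative uniqueness, producing a contradiction. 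The pairwise coprime structure of the $S_i$'s together with $n=2$ is essential for this cascading argument, and this is the step I expect to be most technical.
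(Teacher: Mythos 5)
There is a genuine gap in your proposal, and it stems from not pinning down the scope of the theorem. Although the statement says ``for each type of factorization,'' the paper's proof opens with ``Let the type of the factorizations be $(S_1)\dots(S_d)$,'' and the order-three section explicitly remarks that for the coarser two-factor types the common obstacle is \emph{not} unique. So the theorem applies only to factorizations into $d$ linear factors ($k=d$, each $d_i=1$); the general-$k$ claim you try to argue toward is simply false, which is why your final ``cascading'' step refuses to close.

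Once $k=d$ and $d_i=1$, the difficulty you flag is vacuous. Since $t^* \le d-2$ by Theorem \ref{obstacle_d-1_zero}, one has $t^*-d+d_i = t^*-d+1 \le -1 < 0$, so the iterative uniqueness (your first paragraph) already fixes \emph{every} component of every $F_i$: each $F_i = \widehat{S}_i + c_i$ has only an order-zero component, determined at step $t=d-1$. There are no free low-order components to modify, and $R = L - F_1\circ\cdots\circ F_d$ is therefore unique outright — no $\mathfrak{m}$-primary or propagation argument is needed. The paper's own proof is even shorter and purely about degrees: by Theorem \ref{unique_in_ring_arb_factors} any two common-obstacle symbols of order $p\le d-2$ differ by an element of $I = (\Sym_L/S_1,\dots,\Sym_L/S_d)$ in degree $p$; but every generator of $I$ has degree $d-1$, so $I$ is zero in degrees $\le d-2$, forcing the difference to vanish. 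Your observation that $I$ is $\mathfrak{m}$-primary is correct but plays no role; the relevant rigidity is the lower bound $d-1$ on the degrees of its generators, not any primary-decomposition structure.
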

\begin{proof}
    Let the type of the factorizations be $(S_1) \dots (S_d)$,
  and let $P$ be a common obstacle for this type.
  Let the order of common obstacles be $p$.
    Assume there is another common obstacle for this type,
  then it is of the form
\[
 P+ (\Sym_L/S_1) \cdot A_1+ ... + (\Sym_L/S_d) \cdot A_d,
\]
 where $A_i$ are some homogeneous polynomials of orders
 $p_i=p-\ord(\Sym_L/S_i)=p-(d-1)$.
 That is $p \geq d-1$.

 On the other hand, by Theorem \ref{obstacle_d-1_zero},
 the ring of obstacles is $0$ to order $d-1$,
 and so $p \leq d-2$.
\end{proof}
%
%
%
%
\section{Bivariate Operators of Order Two}

   Consider a second-order hyperbolic operator $L \in K[D_x,D_y]$ that is
in such a system of coordinate that the symbol of $L$ is $XY$.
   Then by Theorems \ref{obstacle_d-1_zero} and \ref{obstacle_d-2_unique}, both
 common obstacles to factorizations of $L$ have order $0$ and
 are uniquely defined.
   We compute explicit formulas.
\begin{theorem} \label{obstS1S2}
  Let
\[
L=D_x \cdot D_y+a D_x + b D_y+ c,
\]
  where $a_{10},a_{01}, a_{00} \in K$. Then obstacles of types $(X)(Y),
\ (Y)(X)$ are
\[
  \begin{array}{l}
c- ab - \partial_x(a),\\
c- ab - \partial_y(b)
  \end{array}
\]
respectively.
\end{theorem}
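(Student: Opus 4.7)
The plan is to explicitly construct, for each of the two factorization types, the unique partial factorization of order $0$ whose existence and uniqueness are guaranteed by Theorems \ref{obstacle_d-1_zero} and \ref{obstacle_d-2_unique}. Subtracting the resulting composition from $L$ yields a common obstacle of order $0$, and by Theorem \ref{obstacle_d-2_unique} this common obstacle is unique, so it coincides with the obstacle.

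First I would treat the type $(X)(Y)$. Since $\Sym_L = X \cdot Y$ with coprime factors $S_1 = X$ and $S_2 = Y$, every factorization of this type is of the form
\[
  L = (D_x + \alpha) \circ (D_y + \beta),\qquad \alpha,\beta \in K.
\]
Expanding the composition using $D_x \circ \beta = \beta D_x + \partial_x(\beta)$, I obtain
\[
  (D_x + \alpha) \circ (D_y + \beta) = D_x D_y + \beta D_x + \alpha D_y + \alpha\beta + \partial_x(\beta).
\]
Comparing the first- and zeroth-order components with those of $L$ gives $\beta = a$, $\alpha = b$, and the residual zeroth-order term
\[
  L - (D_x + b)\circ(D_y + a) = c - ab - \partial_x(a).
\]
Thus $R_1 := c - ab - \partial_x(a)$ is obtained by subtracting a composition of the required shape, so $R_1$ is a common obstacle to factorization of type $(X)(Y)$; its order is $0$, which matches the bound from Theorem \ref{obstacle_d-2_unique}.

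The argument for type $(Y)(X)$ is entirely symmetric. Writing a general candidate $L = (D_y + \alpha')\circ(D_x + \beta')$ and expanding via $D_y \circ \beta' = \beta' D_y + \partial_y(\beta')$ yields, after matching leading components, $\alpha' = a$, $\beta' = b$, and leaves the residual $c - ab - \partial_y(b)$.

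The only thing that needs justification beyond direct computation is that the residuals just computed are genuinely obstacles in the sense of the definition, i.e.\ that no factorization of smaller residual order exists. This follows from Theorem \ref{obstacle_d-2_unique}: a strictly hyperbolic operator of order $d = 2$ admits a unique common obstacle of order at most $d-2 = 0$ for each factorization type, so either the obstacle is $0$ (and a true factorization exists) or it is the nonzero scalar just displayed. In both cases the residual above is the common obstacle, and hence the obstacles of types $(X)(Y)$ and $(Y)(X)$ are $c - ab - \partial_x(a)$ and $c - ab - \partial_y(b)$ respectively. No significant difficulty is anticipated; the only care needed is the non-commutativity $D_x \circ \beta \neq \beta \circ D_x$, which is precisely what produces the derivative term in the obstacle.
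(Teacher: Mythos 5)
Your proposal is correct and follows essentially the same route as the paper: you postulate the general shape $(D_x+\alpha)\circ(D_y+\beta)$ (resp.\ $(D_y+\alpha')\circ(D_x+\beta')$), match the first-order components to pin down $\alpha,\beta$ uniquely, and subtract the resulting composition from $L$ to read off the zeroth-order residual. The only cosmetic difference is that you spell out the final step -- invoking Theorems~\ref{obstacle_d-1_zero} and~\ref{obstacle_d-2_unique} to confirm that the order-$0$ residual is indeed the unique common obstacle -- whereas the paper places that justification in the paragraph preceding the theorem rather than inside the proof. (Minor attribution note: the bound ``order at most $d-2$'' is Theorem~\ref{obstacle_d-1_zero}, while Theorem~\ref{obstacle_d-2_unique} gives uniqueness; you cite only the latter, but this does not affect the correctness of the argument.)
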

\begin{proof}  A factorization of $L$ of type $(X)(Y)$ has the form
\[
L=  \left(D_x+ g_{00} \right) \circ  \left( D_y +  h_{00} \right),
\]
where $g_{00}, h_{00}$ are some elements of $K$.
Comparing components
of order $1$ on the right and on the left, we have
\begin{equation} \label{L1_second_order}
(a-h_{00}) D_x + (b-g_{00})D_y=0\ ,
\end{equation}
that is $a=h_{00}$, $ b=g_{00}$. Now we compute the obstacle as
\[
L-\left(D_x+ b \right) \circ  \left( D_y + a \right)=c- ab -
\partial_x(a).
\]
One may find the obstacle for type $(Y)(X)$ analogously.
\end{proof}
\begin{remark}
  The obtained obstacles are the invariants of Laplace \cite{tsarev05}.
\end{remark}
\section{Bivariate Operators of Order Three}

Consider some operator $L \in K[D_1,D_2]$ of order three. Let the
symbol of $L$ be $S_1 \cdot S_2 \cdot S_3$, then the following types
of factorizations are possible: six types of factorization into
three factors:
\[
 (S_1)(S_2)(S_3),\ (S_1)(S_3)(S_3), (S_2)(S_1)(S_3),\ (S_2)(S_3)(S_1),\ (S_3)(S_1)(S_2),\
(S_3)(S_2)(S_1),
\]
and six types of factorization into two factors:
\[  (S_1)  (S_2  S_3),\ (S_2)  (S_1
 S_3),\ (S_3)  (S_1  S_2),\ (S_1  S_2)  (S_3),\ (S_1  S_3)
 (S_2),\ (S_2  S_3)  (S_1).
\]
\subsection{Two Factors}

  The theory introduced above applies for the case of pairwise coprime
 symbols of factors.
That is, if the considered type is $(S_1) (S_2S_3)$,
 then $S_1$ and $S_2 S_3$ should be coprime.
 Taking this and the symmetry into account,
  we restrict ourselves to considering two important special cases: factorization of the type $(X)(X^2+XY)$ for an
 operator with symbol $X^2Y+XY^2$ and of the type $(X)(Y^2)$ for an
 operator with symbol $XY^2$.

  Note that by Theorem \ref{obstacle_d-1_zero} common obstacles of
 these types may be of orders one and zero only,
 in the first case a common obstacle is not unique.
\begin{theorem} \label{obs_L_3}
  Let
\[
L=\widehat{\Sym}_L + a_{20}D_{xx}+ a_{11}D_{xy}+ a_{02}D_{yy}+
a_{10}D_{x}+ a_{01}D_{y}+ a_{00},
\]
  where all $a_{ij} \in K.$

  Let $\Sym_L=XY(X+Y)$, then
\[
\begin{array}{ll}
 \Obst_{(X)(YX+YY)}=&\Big(a_{02}^2- a_{11} a_{02}+a_{01}+\partial_x(a_{02}-a_{11}) \Big)
                     D_y +\\
& a_{00} - a_{02} a_{10} + a_{02}^2 a_{20} + 2 a_{02}
\partial_x(a_{20})-\partial_x(a_{10})+ a_{20} \partial_x(a_{02})+\partial_{xx}(a_{20}),
\end{array}
\]
is a common obstacle to factorizations of $L$ of type $(X)(YX+YY)$.

  Let $\Sym_L=X^2Y$, then
\[
\begin{array}{ll}
 \Obst_{(Y)(XX)}=&\Big(a_{10} - a_{20}a_{11} - \partial_y(a_{11}) \Big)
                     D_x +\\
& a_{00} - a_{20} a_{01} + a_{20}^2 a_{02} + 2 a_{20}
\partial_y(a_{02})-\partial_y(a_{01})+  a_{02}\partial_y(a_{20})+\partial_{yy}(a_{02}),
\end{array}
\]
is a common obstacle to factorizations of $L$ of type $(Y)(XX)$.
\end{theorem}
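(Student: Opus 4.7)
My plan is to extend the direct matching argument of Theorem~\ref{obstS1S2} by one more order of comparison, since $L$ is now of order three. In both cases the two factor symbols are coprime ($X$ versus $Y(X+Y)$, and $Y$ versus $X^2$), so by Theorem~\ref{obstacle_d-1_zero} every common obstacle vanishes at order $d-1=2$, and only orders one and zero remain to be computed. The general principle, as in the proof of Theorem~\ref{dimension}, is to write the general ansatz for a factorization, compare with $L$ component-by-component in descending order using equation~(\ref{P_t=(Sym_L/S_1)}), and uniquely determine the lower-order components of the factors until some equation can no longer be solved; the leftover is then, by construction, a common obstacle.

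For the type $(X)(XY+Y^2)$ I would take
\[
F_1 \circ F_2 = (D_x + g_{00}) \circ (D_{xy} + D_{yy} + h_{10} D_x + h_{01} D_y + h_{00})
\]
and expand using $D_x \circ \varphi = \varphi D_x + \partial_x(\varphi)$. The order-two system $h_{10} = a_{20}$, $h_{01} + g_{00} = a_{11}$, $g_{00} = a_{02}$ is triangular and yields the unique solution $g_{00} = a_{02}$, $h_{10} = a_{20}$, $h_{01} = a_{11} - a_{02}$. At order one the coefficient of $D_x$ still contains the free unknown $h_{00}$ and pins it down as $h_{00} = a_{10} - \partial_x(a_{20}) - a_{02} a_{20}$, whereas the coefficient of $D_y$ involves no remaining unknown and so contributes the $D_y$-part of the claimed obstacle. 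Substituting the value of $h_{00}$ into the order-zero leftover $a_{00} - \partial_x(h_{00}) - g_{00} h_{00}$ and applying Leibniz produces the constant term of $\Obst_{(X)(YX+YY)}$.

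For the type $(Y)(X^2)$ the argument is formally identical after swapping $x$ and $y$: I take $F_1 = D_y + g_{00}$ and $F_2 = D_{xx} + h_{10} D_x + h_{01} D_y + h_{00}$, solve the (again triangular) order-two system uniquely, use the order-one coefficient of $D_y$ to determine $h_{00}$, and read the $D_x$-part and the constant of the obstacle from the leftover at orders one and zero.

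I expect the only real difficulty to be careful bookkeeping of the derivative terms produced when commuting $D_x$ (respectively $D_y$) past functional coefficients — in particular the iterated term $\partial_{xx}(a_{20})$ (respectively $\partial_{yy}(a_{02})$) only emerges after substituting the order-one expression for $h_{00}$ back into the order-zero residual and differentiating once more. I would tabulate the contributions by differential order before simplifying, to minimise the risk of sign errors or dropped cross-derivative terms.
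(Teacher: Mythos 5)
Your proposal is correct and follows essentially the same route as the paper: write the general factorization ansatz for type $(X)(XY+Y^2)$, solve the triangular order-two system for $g_{00}, h_{10}, h_{01}$, use the order-one coefficient of $D_x$ to fix $h_{00}$, and then read the obstacle off the residual in the $D_y$-coefficient at order one and the remaining term $a_{00}-\partial_x(h_{00})-g_{00}h_{00}$ at order zero, with the symmetric computation for $(Y)(X^2)$. The paper's proof states exactly these steps, just more tersely.
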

\begin{proof}
 All factorizations of type $(X)(YX+YY)$ have the form
\begin{equation} \label{L_S_1_S_2S_31}
L= \left(D_x + G_0 \right) \circ  \left( D_{xy} + D_{yy}+ H_1 + H_0
\right),
\end{equation}
where $G_0=g_{00} \in K$, $H_1=h_{10} D_x + h_{01} D_y \in
K[D_x,D_y]$, $H_0=h_{00} \in K$. Compare components of order $2$ on
 both sides of equality (\ref{L_S_1_S_2S_31}), then we get a
system of linear equations in coefficients $h_{10}, h_{01}, g_{00}$:
\[
  \begin{cases}
    a_{20} = h_{10}, \\
    a_{11} = h_{01} +  g_{00}, \\
    a_{02} = g_{00}.
  \end{cases}
\]
We find the unique solution of the system. Then, we compare
coefficients in $D_x$ on the both sides of (\ref{L_S_1_S_2S_31}),
and so we get
\[
h_{00}=a_{10}-a_{20}a_{02}- \partial_x(a_{20}).
\]
Now we may compute a common obstacle as $P=L- \left(D_x + G_0
\right) \circ  \left( D_{xy} + D_{yy}+ H_1 + H_0 \right)$.

One may find the obstacle for type $(Y)(XX)$ analogously.
\end{proof}
\subsection{Three Factors}

Here it is enough to consider the case of hyperbolic operators with
symbol $XY(X+Y)$ and type $(X)(Y)(X+Y)$ of factorizations. In this
case a common obstacle may be of orders $1$ and $0$ only (Theorem
\ref{obstacle_d-1_zero}) and it is unique (Theorem
\ref{obstacle_d-2_unique}).

\begin{theorem} \label{obs_L_3_three_factors}
Let
\[
L=D_x D_y (D_x+D_y) + a_{20}D_{xx}+ a_{11}D_{xy}+ a_{02}D_{yy}+
a_{10}D_{x}+ a_{01}D_{y}+ a_{00},
\]
where all $a_{ij} \in K.$ The common obstacle of type $(X)(Y)(X+Y)$
is
\[
\begin{array}{ll}
 \Obst_{(X)(Y)(X+Y)}=&(a_{10}-a_{20}a_{11}+a_{20}^2-\partial_x(a_{20})+\partial_y(s_2))
D_x+\\
 &(a_{01}-a_{02}a_{11}+a_{02}^2+\partial_x(-a_{11}+a_{02})) D_y+\\
 &a_{00} + a_{20}a_{02}s_2+ s_2 \partial_x(a_{20})+ (a_{20} \partial_x+
\partial_{xy}+a_{02}\partial_y)(s_2),
\end{array}
\]
where $s_2=a_{20}-a_{11}+a_{02}$.
\end{theorem}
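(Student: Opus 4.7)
The plan is to imitate the strategy already used in Theorems \ref{obstS1S2} and \ref{obs_L_3}: write down the generic factorization of the prescribed type, compare components of $L$ and the composition degree by degree from the top, solve the resulting triangular linear systems, and then read off the obstacle as what remains in the order-$1$ and order-$0$ components.

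First, I would write a candidate factorization
\[
F=(D_x+g)\circ(D_y+h)\circ(D_x+D_y+k),
\]
with $g,h,k\in K$ to be determined. The order-$3$ part of $F$ is automatically $\Sym_L$. Expanding and comparing the order-$2$ parts yields the linear system
\[
h=a_{20},\qquad g+h+k=a_{11},\qquad g=a_{02},
\]
with unique solution $g=a_{02}$, $h=a_{20}$, $k=a_{11}-a_{20}-a_{02}=-s_2$, where $s_2=a_{20}-a_{11}+a_{02}$. By Theorem \ref{obstacle_d-1_zero} this is the (automatically existing, unique) partial factorization of order $2$, so no freedom remains in the factors.

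Next, with $g,h,k$ fixed, I would expand $F$ by repeatedly applying $D_x\circ f=fD_x+\partial_x(f)$ and $D_y\circ f=fD_y+\partial_y(f)$, collecting the order-$1$ and order-$0$ contributions. The coefficient of $D_x$ in $F$ comes out as $\partial_x(h)+\partial_y(k)+hk+gh$, the coefficient of $D_y$ as $\partial_x(h)+\partial_x(k)+g(h+k)$, and the constant term as $\partial_{xy}(k)+k\partial_x(h)+h\partial_x(k)+g\partial_y(k)+ghk$. The common obstacle is $P=L-F$; by Theorem \ref{obstacle_d-2_unique} it is unique, so once these three coefficients are substituted with $g=a_{02}$, $h=a_{20}$, $k=-s_2$ and the remainders are subtracted from $a_{10},a_{01},a_{00}$, we obtain exactly the formula stated.

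The main obstacle is purely computational bookkeeping: keeping track of the three commutator terms introduced when passing $D_x$ through the lower-order parts of the middle and right factors (and similarly for $D_y$ through the right factor), and then rewriting expressions such as $-a_{20}s_2+a_{20}a_{02}$ as $a_{20}^2-a_{20}a_{11}$ via the definition of $s_2$. No new theoretical ingredient is needed beyond the uniqueness guaranteed by Theorems \ref{obstacle_d-1_zero} and \ref{obstacle_d-2_unique}; the computation can, if desired, be verified on a computer algebra system.
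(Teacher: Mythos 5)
Your proposal is correct and follows essentially the same route as the paper: write the generic factorization $(D_x+g)\circ(D_y+h)\circ(D_x+D_y+k)$, solve the order-$2$ system to get $g=a_{02}$, $h=a_{20}$, $k=-s_2$, then compute the obstacle as $L-F$. Your intermediate expressions for the order-$1$ and order-$0$ coefficients of the composition are correct and, after substitution and use of $s_2=a_{20}-a_{11}+a_{02}$, reproduce the stated formula; the paper's own proof states the same steps with fewer explicit details.
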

\begin{proof}
Every factorization of type $(X)(Y)(X+Y)$ has the form:
\begin{equation} \label{L_S_i_S_jS_k}
L= \left(D_x+ g_0 \right) \circ  \left( D_y  + h_0 \right) \circ
\left( D_x + D_y + f_0 \right).
\end{equation}
Compare components of order $2$ and get the only solution
\[
h_0=a_{20}, \ g_0=a_{02}, \ f_0=a_{11}-a_{02}-a_{20}.
\]
Now, we may compute the  common obstacle as the difference of the
left and the right sides of the equation (\ref{L_S_i_S_jS_k}).
\end{proof}

\end{document}